\newcommand{\vx}[0]{\mathbf x}
\newcommand{\vz}[0]{\mathbf z}
\newcommand{\vH}[0]{\mathbf H}
\newcommand{\vX}[0]{\mathbf X}
\newcommand{\mbf}[1]{\mbox{\boldmath$#1$}}
\newcommand{\calA}[0]{\mathcal{A}}
\newcommand{\calC}[0]{\mathcal{C}}
\newcommand{\calD}[0]{\mathcal{D}}
\newcommand{\calE}[0]{\mathcal{E}}
\newcommand{\calF}[0]{\mathcal{F}}
\newcommand{\calJ}[0]{\mathcal{J}}
\newcommand{\calN}[0]{\mathcal{N}}
\newcommand{\calS}[0]{\mathcal{S}}
\newcommand{\calT}[0]{\mathcal{T}}
\newcommand{\calV}[0]{\mathcal{V}}
\newcommand{\D}[0]{\mathbb{D}}
\newcommand{\E}[0]{\mathbb{E}}
\newcommand{\Id}[0]{\text{Id}}
\newcommand{\Proj}[0]{\mathcal{P}}
\def\varp{\boldsymbol\varepsilon}
\def\P{\mathbb{P}}
\newcommand{\Prob}[0]{\mathbb{P}}
\newcommand{\ind}[1]{\mathbb{I}\left\{#1\right\}}
\newcommand{\rbr}[1]{\left(#1\right)}
\newcommand{\rbR}[1]{\left[#1\right]}
\newcommand{\rBr}[1]{\left\{#1\right\}}
\newcommand{\rBR}[1]{\left|#1\right|}
\newcommand{\RBR}[1]{\left\|#1\right\|}
\newcommand{\bs}[1]{\boldsymbol{#1}}
\theoremstyle{remark}
\newtheorem{thm}{Theorem}[section]
\newtheorem{lem}[thm]{Lemma}
\newtheorem{prop}[thm]{Proposition}
\newtheorem{defn}[thm]{Definition}
\newtheorem{rem}[thm]{Remark}
\newtheorem{ass}[thm]{Assumption}
\newtheorem{ex}[thm]{Example}
\begin{document}

\title[Dynamic networks for time series]{Estimation of dynamic networks for high-dimensional nonstationary time series}
\author[Mengyu Xu]{Mengyu Xu}
\address{\newline Department of Statistics and Data Science\newline 
University of Central Florida \newline
4000 Central Florida Blvd, Orlando, FL 32816. \newline
{\it E-mail}: \href{mailto:Mengyu.Xu@ucf.edu}{\tt Mengyu.Xu@ucf.edu}
}

\author[Xiaohui Chen]{Xiaohui Chen}
\address{\newline Department of Statistics\newline
University of Illinois at Urbana-Champaign\newline
S. Wright Street, Champaign, IL 61820\newline
{\it E-mail}: \href{mailto:xhchen@illinois.edu}{\tt xhchen@illinois.edu}\newline 
{\it URL}: \href{http://publish.illinois.edu/xiaohuichen/}{\tt http://publish.illinois.edu/xiaohuichen/}
}

\author[Wei Biao Wu]{Wei Biao Wu}
\address{\newline Department of Statistics\newline
University of Chicago\newline
5747 S. Ellis Avenue, Jones 311, Chicago, IL 60637\newline
{\it E-mail}: \href{mailto:wbwu@galton.uchicago.edu}{\tt wbwu@galton.uchicago.edu}
}

\date{This version: \today}
%\subjclass[2010]{Primary: ; Secondary: ;}
\thanks{X. Chen's research is supported in part by NSF CAREER Award DMS-1752614 and UIUC Research Board Award RB18099. X. Chen acknowledges that part of this work is carried out at the MIT Institute for Data, System, and Society (IDSS). W.B. Wu's research is supported in part by NSF DMS-1405410.}

\begin{abstract}
This paper is concerned with the estimation of time-varying networks for high-dimensional nonstationary time series. Two types of dynamic behaviors are considered: structural breaks (i.e., abrupt change points) and smooth changes. To simultaneously handle these two types of time-varying features, a two-step approach is proposed: multiple change point locations are first identified on the basis of comparing the difference between the localized averages on sample covariance matrices, and then graph supports are recovered on the basis of a kernelized time-varying constrained $L_1$-minimization for inverse matrix estimation (CLIME)
 estimator on each segment. We derive the rates of convergence for estimating the change points and precision matrices under mild moment and dependence conditions. In particular, we show that this two-step approach is consistent in estimating the change points and the piecewise smooth precision matrix function, under a certain high-dimensional scaling limit.  The method is applied to the analysis of network structure of the S\&P 500 index between 2003 and 2008.
\end{abstract}
\maketitle

%\textbf{Keyword:} {High-dimensional time series, nonstationarity, network estimation, change points, kernel estimation.}

\section{Introduction}\label{sec:introduction_network}
%\textcolor{red}{Explain GGM and the partial correlation graphs}
%\footnote{This version: \today}
Networks are useful tools to visualize the relational information among a large number of variables. Undirected graphical model is a rich class of statistical network model that encodes the conditional independence~\cite{lauritzen1996graphical}. Canonically, Gaussian graphical models (or its normalized version partial correlation~\cite{pengwangzhouzhu2009a}) can be represented by the inverse covariance matrix (i.e., the precision matrix), where a zero entry is associated with a missing edge between two vertices in the graph. Specifically, two vertices are not connected if and only if they are conditionally independent given the value of all other variables.

On one hand, there is a large volume of literature on estimating the (static) precision matrix for graphical models in the high-dimensional setting, where the sample size and the dimension are both large~\cite{MR2278363,friedmanhastietibshirani2008a,MR2417243,rothmanbickellevinazhu2008a,yuan2010a,yuanlin2007a,ravikumarwainwrightraskuttiyu2008a,MR2382651, MR2847973, MR2847949,fanfengwu2009a,MR3335801,loh2014high,loh2013structure}. Most of the earlier work along this line assumes that the underlying network is time-invariant. This assumption is quite restrictive in practice and hardly plausible for many real-world applications such as gene regulatory networks, social networks, and stocking market, where the underlying data generating mechanisms are often dynamic. On the other hand, dynamic random networks have been extensively studied from the perspective of large random graphs such as community detection and edge probability estimation for dynamic stochastic block models (DSBMs) ~\cite{lebreeatal2010a,przytycka2010a,RePEc:eee:finmar:v:14:y:2011:i:1:p:1-46,chi2010network,durante2017nonparametric,durante2016locally,han2015consistent,danaher2014joint,dondelinger2013non,pensky2019dynamic,pensky2019spectral,bhattacharjee2018change,bartlett2018two,gaucher2019maximum}. Such approaches do not model the sampling distributions of the error (or noise), since the ``true" networks are connected with random edges sampled from certain probability models such as the Erd\H{o}s-R\'enyi graphs~\cite{ErdosRenyi59a} and random geometric graphs~\cite{penrose2003}.

In this paper, we shall view the (time-varying) networks of interests as non-random graphs. We adopt the graph signal processing approach for denoising the nonstationary time series and target on estimating the {\it true unknown} underlying graphs. Despite the recent attempts towards more flexible time-varying models~\cite{zhoulaffertywasserman2010a,kolarxing2011a,kolarsongxing2010a,kolarxing2014a,qiu2015joint,lu2015post,ahmedxing2009,tibshiranietal2005}, there are still a number of major limitations in the current high-dimensional literature. First, theoretical analysis was derived under the fundamental assumption that the observations are either temporally {\it independent}, or the temporal dependence has very specific forms such as Gaussian processes or (linear) vector autoregression (VAR)~\cite{zhoulaffertywasserman2010a,kolarxing2011a, MR3335801,cho2015multiple,royatchadmichailidis2014a, qiu2015joint,zhou2014gemini}. Such dynamic structures are unduly demanding in view that many time series encountered in real applications have very complex nonlinear spatial-temporal dependency~\cite{howell1993,fanyao2003}. Second, most existing work assumes the data have time-varying distributions with sufficiently light tails such as Gaussian graphical models and Ising models~\cite{zhoulaffertywasserman2010a,kolarxing2011a,cho2015multiple,royatchadmichailidis2014a,kolarxing2014a}. Third, in change point estimation problems for high-dimensional time series, piecewise constancy is widely used~\cite{royatchadmichailidis2014a,cho2015multiple,fryzlewicz2014wild,kokszkaleipus2000}, which can be fragile in practice. For instance, financial data often appears to have time-dependent cross-volatility with structural breaks~\cite{MR2572452}. For resting-state fMRI signals, correlation analysis reveals both slowly varying and abrupt changing characteristics corresponding to modularities in brain functional networks \cite{changglover2010a,hutchison2013a}.

Advances in analyzing high-dimensional (stationary) time series have been made recently to address the aforementioned the nonlinear spatial-temporal dependency issue~\cite{qiu2015joint, wiesel2013time, qiu2014robust, MR3335801,zhou2014gemini,chen2013covariance,ChenXuWu2016_IEEETSP,basu2015,MR3205718,shu2014estimation}. In \cite{chen2013covariance,MR3205718,shu2014estimation}, the authors considered the theoretical properties of regularized estimation of covariance and precision matrices, based on various dependence measure of high-dimensional time series. \cite{lu2015post} considered the non-paranormal graphs that evolves with a random variable.  \cite{qiu2015joint} discussed the joint estimation of Gaussian graphical models based on a stationary VAR(1) model with special coefficient matrices, which may also depend on certain covariates. The authors applied a constrained $L_1$-minimization for inverse matrix estimation (CLIME) estimator with a kernel estimator of covariance matrix and developed consistency in the graph recovery at a given time points. \cite{MR3335801} studied the recovery of the Granger causality across time and nodes assuming a stationary Gaussian VAR model with unknown order. %They applied a bi-level threshold group lasso-type estimator and discussed support recovery assuming restricted eigenvalues and irrepresentative condition. 

In this paper, we focus on the recovery of time-varying undirected graphs based on regularized estimation of the precision matrices for a general class of nonstationary time series. We simultaneously model two types of dynamics: abrupt changes with an unknown number of change points and the smooth evolution between the change points. In particular, we study a class of high-dimensional {\it piecewise locally stationary processes} in a general nonlinear temporal dependency framework, where the observation are allowed to have a finite polynomial moment.

More specifically, there are two main goals of this paper: first to estimate the change point locations, as well as the number of change points, and second to estimate the smooth precision matrix functions between the change points. Accordingly, our proposed method contains two steps. In the first step, the maximum norm of the local difference matrix is computed at each time point and the jumps in the covariance matrices are detected at the location where the maximum norms are above a certain threshold. In the second step, the precision matrices before and after the jump are estimated by a regularized kernel smoothing estimator. These two steps are recursively performed until a stopping criterion is met. Moreover, a boundary correction procedure based on data reflection is considered to reduce the bias near the change point.

We provide an asymptotic theory to justify the proposed method in high dimensions: point-wise and uniform rates of convergence are derived for the change point estimation and graph recovery under mild and interpretable conditions. The convergence rates are determined via subtle interplay among the sample size, dimensionality,  temporal dependence, moment condition, and the choice of bandwidth in the kernel estimator. Our results are significantly more involved than problems for sub-Gaussian tails and independent samples. We shall highlight that uniform consistency in terms of time-varying network structure recovery is much more challenging and difficult than pointwise consistency. %Stronger conditions on the networks regarding the synchronization are proposed, under which support consistency and related error control can be established. 
For the multiple change point detection problem, we also characterize the threshold of the difference statistic that gives consistent selection of the number of change points.

We fix some notation. Positive, finite and non-random constants, independent of the sample size $n$ and dimension $p$, are denoted by $C, C_1, C_2, \dots$, whose values may differ from line to line. For the sequence of real numbers, $a_n$ and $b_n$, we write $a_n=O(b_n)$ or $a_n\lesssim b_n$ if $\lim\sup_{n\to\infty} (a_n/b_n)\le C$ for some constant $C<\infty$ and $a_n=o(b_n)$ if $\lim_{n\to\infty}(a_n/b_n)=0$. We say $a_n\asymp b_n$ if $a_n=O(b_n)$ and $b_n=O(a_n)$.  For a sequence of random variables $Y_n$ and a corresponding set of constants $a_n$, denote $Y_n=O_\Prob(a_n)$ if for any $\varepsilon>0$ there is a constant $C>0$ such that $\Prob(|Y_n|/a_n> C)< \varepsilon$ for all $n$. For a vector $\vx \in \mathbb{R}^p$, we write $|\vx|= (\sum_{j=1}^p x_j^2)^{1/2}$. For a matrix $\Sigma$, $|\Sigma|_1 = \sum_{j,k}|\sigma_{j k}|$, $|\Sigma|_\infty = \max_{j,k} |\sigma_{jk}|$, $|\Sigma|_{L_1}=\max_k\sum_j |\sigma_{jk}|$, $|\Sigma|_F = (\sum_{j, k}\sigma_{j k}^2)^{1/2}$ and $\rho(\Sigma) = \max \{|\Sigma \vx| : |\vx|=1\}$. For a random vector $\vz \in \mathbb{R}^p$, write $\vz \in {\mathcal L}^a$, $a > 0$, if $\| \vz \|_a =: [ \E(|\vz|^a) ]^{1/a} < \infty$. Let $\RBR{\vz}=\RBR{\vz}_2$. Denote $a\wedge b = \min(a,b)$ and $a \vee b = \max(a,b)$. 

The rest of the paper is organized as following. Section~\ref{sec:model+assumptions} presents the time series model, as well as the main assumptions, which can simultaneously capture the smooth and abrupt changes. In Section~\ref{sec:method_network}, we introduce the two-step method that first segments the time series based on the difference between the localized averages on sample covariance matrices and then recovers the graph support based on a kernelized CLIME estimator. In Section~\ref{sec:theory_network}, we state the main theoretical results for the change point estimation and support recovery. Simulation examples are presented in Section~\ref{sec:simulation_network} and a real data application is given in Section~\ref{sec:real-data-analysis_network}. Proof of main results can be found in Section~\ref{sec:proof}.

\section{Time series model}
\label{sec:model+assumptions}

We first introduce a class of causal vector stochastic process. Then we state the assumptions to derive an asymptotic theory in Section~\ref{sec:theory_network} and explain their implications. Let $\varp_i\in \mathbb{R}^p, {i\in \mathbb{Z}}$ be independent and identically distributed (i.i.d.) random vectors and ${\calF}_i=(\ldots, \varp_{i-1},\varp_i)$ be a shift process. Let $\vX^\circ_i(t) = (X^\circ_{i1}(t), \dots, X^\circ_{ip}(t))$ be a $p$-dimensional nonstationary time series generated by
\begin{align}
\label{eqn:data_generetaion_mechnism}
\vX_i ^\circ(t)= \vH(\calF_i; \, t),
\end{align}
 where $\vH(\cdot;\cdot)=\big(H_1(\cdot;\cdot),\ldots,H_p(\cdot;\cdot))$ is an $\mathbb{R}^p$-valued jointly measurable function. %We rescale the time index to $t_i=i/n$ and %assume that the observations embed in the double-index sequence $(\vX_i^\circ(t_j))_{1\le i,j\le n}$, i.e., 
 Suppose we observe the data points $\vX_i=\vX_{i,n}=\vX^\circ_{i}(t_i)$ at the evenly spaced time intervals $t_i = i/n, i=1,2,\dots,n$,
\begin{align}\label{eqn:data_local}
\vX_{i,n}=\vH(\calF_i; \, i/n).
\end{align}
 We drop the subscription $n$ in $\vX_{i,n}$ in the rest of this section. Since our focus is to study the second-order properties, the data is assumed to be mean zero.

Model (\ref{eqn:data_generetaion_mechnism}) is first introduced in \cite{draghicescu2009quantile}. The stochastic process $\big(X^\circ_{i}(t)\big)_{i\in \mathbb Z, t\in[0,1)}$ can be thought as a triangular array system, double indexed by $i$ and $t$, while the observations $(X_i)_{i=1}^n$ are sampled from the diagonal of the array. On one hand, fixing the time index $t$, the (vertical) process  $\big(X^\circ_{i}(t)\big)_{i\in \mathbb Z}$ is stationary. On the other hand, since $\vH({\calF}_i;t_i)$ is allowed to vary with $t_i$, the diagonal process (\ref{eqn:data_local}) is able to capture nonstationarity.  

The process $(\vX_i)_{i\in \mathbb Z}$ is causal or non-anticipative as $\vX_i$ is an output of the past innovations $(\varp_{j})_{j\le i}$ and does not depend on the future innovations.  In fact, it covers a broad range of linear and nonlinear, stationary and non-stationary processes such as vector auto-regressive moving average processes, locally stationary processes, Markov chains, nonlinear functional processes~ \cite{MR2172215,draghicescu2009quantile,zhou2009local,zhouwu2010,chen2013covariance}.   

Motivated by real applications where nonstationary time series data can involve both abrupt breaks and smooth varies between the breaks, we model the underlying processes as piecewise locally stationary with a finite number of structural breaks.

\begin{defn}[Piecewise locally stationary time series model]
\label{defn:lp_function}
Define $\mathrm{PLS}_\iota([0, 1], L)$ as the collection of mean-zero piecewise locally stationary processes on $[0, 1]$, if for each $(X(t))_{0\le t\le 1} \in $ $\mathrm{PLS}_\iota([0, 1], L)$, there is a nonnegative integer $\iota$ such that $X(t)$ is piecewise stochastic Lipschitz continuous in $t$ with Lipschitz constant $L$ on the interval $[t^{(l)}, t^{(l+1)}), l = 0,\cdots,\iota$, where $0 = t^{(0)} < t^{(1)} \dots < t^{(\iota)} < t^{(\iota+1)} =1$. A vector stochastic process $(\vX(t))_{0\le t\le 1} \in \mathrm{PLS}_\iota([0, 1], L)$ if all coordinates belong to $\mathrm{PLS}_\iota([0, 1], L)$. For the process $(X^\circ_0(t))_{0\le t\le 1}$ defined in  (\ref{eqn:data_generetaion_mechnism}), this means that there exists a non-negative  integer $\iota$ and a constant $L>0$, such that
\[
\max_{1\le j \le p}\RBR{H_{j}(\calF_0;t)-H_{j}(\calF_0;t')}\le L|t-t'| \mbox{ for all } t^{(l)}\le t, t'< t^{(l+1)}, 0\le l\le \iota.
\]
\end{defn}

\begin{rem}
If we assume $(\vX_i^\circ(t))_{0\le t\le 1}\in \mathrm{PLS}_\iota([0, 1], L),i\in \mathbb Z$, then it follows that for each $i'=i-k,\ldots, i+k$, where $k/n\to 0$, and that $t^{(l)}\le i,i'<t^{(l+1)}$ for some $0\le l\le \iota$, we have
\[
\max_{1\le j\le p}\|H_{j}({\calF}_{i'};i/n)-H_{j}({\calF}_{i'};i'/n)\|\le Lk/n=o(1).
\]
In other words, within a locally stationary time period, in a local window of $i$, $(X_{i'j})_{i-k\le i'\le i+k}$ can be approximated by the stationary process $(X^\circ_{i'j}(i/n))_{i-k\le i'\le i+k}$  for each $j=1,\ldots, p$. This justifies the terminology of local stationarity.
\end{rem}

The covariance matrix function of the underlying process is $\Sigma(t)=\big(\sigma_{jk}(t)\big)_{1\le j,k\le p}$, $t\in [0,1]$, where $\sigma_{jk}(t)=\E\big(H_j(\calF_0;t)H_k(\calF_0;t))$, and the precision matrix function is $\Omega(t)= \Sigma(t)^{-1}= \big(\omega_{jk}(t)\big)_{1\le j,k\le p}$.  The graph at time $t$ is denoted by $G(t)=({\calV},{\calE}(t))$, where $\calV$ is the vertex set and ${\calE}(t)=\{(j,k): \omega_{jk}(t)\ne 0\}$. 
Note that $(\vX_i^\circ(t))_t \in \mathrm{PLS}_\iota([0, 1], L), i\in \mathbb Z$ implies piecewise Lipschitz continuity in $\Sigma(t)$ except at the breaks $t^{(1)}, \dots, t^{(\iota)}$. In particular, if $\sup_{0\le t\le 1} \max_{1\le j\le p}\RBR{H_j(\calF_0;t)}\le C$ for some constant $C>0$, then
\begin{eqnarray}
\label{eqn:L-cond-1-ture-covariance-matrix}
|\Sigma(s) - \Sigma(t)|_\infty &\le& 2CL |s-t|, \qquad \forall s,t \in [t^{(l)},t^{(l+1)}), l=0,\ldots, \iota.
\end{eqnarray}
The reverse direction is not necessarily true, i.e., (\ref{eqn:L-cond-1-ture-covariance-matrix}) does not indicate $(\vX_i^\circ(t))_t \in \mathrm{PLS}_\iota([0, 1], L)$, $i\in \mathbb Z$ in general. As a trivial example, let $\varepsilon_{ij}=2^{-1/2}$ with probability $2/3$ and $\sqrt 2$ with probability $1/3$ i.i.d for all $i,j$. At time $t_k=k/n$, let $X_{ij}^\circ(t_k)=(-1)^{k}\sqrt t_k\varepsilon_{ij}$. Then for any $k$ and $k'$ such that $k+k'$ is odd, $|\Sigma(t_k)-\Sigma(t_{k'})|_\infty=|t_k-t_{k'}|$, while $\|X^\circ_{01}(t_k)-X^\circ_{01}(t_{k'})\|_2=\sqrt{t_k}+\sqrt{t_{k'}}$.

\begin{ass}
[Piecewise smoothness]
\label{assumption:smoothness}
(i) Assume $(\vX_i^\circ(t))_{0\le t\le 1}\in \mathrm{PLS}_\iota([0, 1], L)$ for each $i\in \mathbb Z$, where $L>0$ and $\iota\ge 0$ are constants independent of $n$ and $p$. 

(ii) For each $l=0,\ldots,\iota$, and $1\le j,k\le p$, we have $\sigma_{jk}(t)\in{\calC}^2[t^{(l)},t^{(l+1)})$.
\end{ass}

%Assumptions \ref{assumption:smoothness} is standard in nonparametric estimation (\cite{kolarxing2014a,zhoulaffertywasserman2010a}). The smoothness condition on $\Sigma(\cdot)$ and $\Omega(\cdot)$ is not a restrictive assumption in the sense that smoothness of $\Sigma(\cdot)$ is implied by that of $\Omega(t)$ under the sparsity condition.

Now we introduce the temporal dependence measure. We quantify the dependence of $\big(\vX_i^\circ(t)\big)_{i\in \mathbb Z}$ by the dependence adjusted norm (DAN) (cf. \cite{wu2016}). Let $\varp'_{i}$ be an independent copy of $\varp_{i}$ and ${\calF}_{i,\{m\}}=(\dots,\varp_{i-m-1},\varp'_{i-m},\varp_{i-m+1},\dots,\varp_i)$. Denote $\vX^\circ_{i,\{m\}}(t)=\big(X^\circ_{i1,\{m\}}(t),\ldots, X^\circ_{ip,\{m\}}(t)\big)$, where $X^\circ_{ij,\{m\}}(t)=H_j({\calF}_{i,\{m\}}; t)$, $1\le j\le p$. Here $\vX^\circ_{i,\{m\}}(t)$ is a coupled version of $\vX^\circ_{i}(t)$, with the same generating mechanism and input, except that $\varp_{i-m}$ is replaced by an independent copy $\varp'_{i-m}$. 
\begin{defn}[Dependence adjusted norm (DAN)]
\label{defn:dan}
Let constants $a\ge1, A > 0$. Assume $\sup_{0\le t\le 1}\|X^\circ_{1j}(t)\|_a<\infty, j=1,\ldots,p$. Define the uniform functional dependence measure for the sequences $(X^\circ_{ij}(t))_{i\in \mathbb Z,t\in[0,1]}$ of form (\ref{eqn:data_generetaion_mechnism}) as
\[
\theta_{m,a,j}=\sup_{0\le t\le 1}\Vert X^\circ_{ij}(t)-X^\circ_{ij,\{m\}}(t)\Vert_a, \quad j = 1, \dots, p,
\]
and $\Theta_{m,a,j}=\sum_{i=m}^\infty\theta_{i,a,j}$. The dependence adjusted norm of $(X^\circ_{ij}(t))_{i\in \mathbb Z,t\in[0,1]}$ is defined as
\[
\RBR{X_{\cdot,j}}_{a,A}=\sup_{m \ge 0} (m+1)^A\Theta_{m,a,j},
\]
whenever $\RBR{X_{\cdot,j}}_{a,A} < \infty$.
\end{defn}

Intuitively, the physical dependence measure quantifies the adjusted stochastic difference between the random variable and its coupled version by replacing past innovations. Indeed, $\theta_{m,a,j}$ measures the impact on $X^\circ_{ij}(t)$ uniform over $t$ by replacing $\mbf\varepsilon_{i-m}$ while freezing all the other inputs, while  $\Theta_{m,a,j}$ quantifies the cumulative influence of replacing $\mbf\varepsilon_{-m}$ on $(X_{ij}^\circ(t))_{i\ge 0}$ uniform over $t$. Then $\RBR{X_{\cdot,j}}_{a,A}$ controls the uniform polynomial decay in the lag of the cumulative physical dependence, where $a$ depends on the  the tail of marginal distributions of $X^\circ_{1,j}(t)$ and $A$ quantifies the polynomial decay power and thus the temporal dependence strength. It is clear that $\RBR{X_{\cdot,j}}_{a,A}$ is a semi-norm, i.e., it is subaddative and absolutely homogeneous. 

\begin{ass}[Dependence and moment conditions]
\label{assumption:dep}
Let $\vX_i^\circ(t)$ be defined in (\ref{eqn:data_generetaion_mechnism}) and $\vX_i$ in (\ref{eqn:data_local}). There exist $q > 2$ and $A > 0$ such that
\begin{equation}
\label{eqn:moment_dan_vecproc}
\nu_{2q}:=\sup_{t\in [0,1]}\max_{1\le j\le p}\E|X^\circ_{j}(t)|^{2q} < \infty \qquad \text{and} \qquad N_{X,2q}:=\max_{1\le j\le p}\RBR{X_{\cdot, j}}_{2q,A} < \infty.
\end{equation}
\end{ass}
We let $M_{X,q}:=\rbr{\sum_{1\le j\le p}\RBR{X_{\cdot, j}}_{2q,A}^q}^{1/q}$ and write $N_X = N_{X,4}$, $M_X = M_{X,2}$. The quantities $M_{X,q}$ and $N_{X,2q}$ measure the $L^q$-norm aggregated effect and the largest effect of the element-wise DANs respectively.  Both quantities play a role in the convergence rates of our estimator. 

Obviously we have  $\|X_{ij}-X_{ij,\{m\}}\|_a\le \theta_{m,a,j}$ and $\max_{1\le j\le p}\E|X_{ij}|^{2q}\le \nu_{2q}$ for all $1\le i\le n$. 
In contrast to other works in high-dimensional covariance matrix and network estimation, where sub-gaussian tails and independence are the keys to ensure consistent estimation, Assumption \ref{assumption:dep} only requires that the time series have finite polynomial moment, and it allows linear and nonlinear processes with short memory in the time domain.

%\textcolor{red}{Is this assumption a necessary implication of PLS? or a stronger version of PLS? Neither.}
\begin{ex}[Vector linear process]\label{example:var}
Consider the following vector linear process model
\[
\vH(\calF_i;t)=\sum_{m=0}^{\infty} A_m(t)\mbf\varepsilon_{i-m},
\]
where $\mbf\varepsilon_i=(\varepsilon_1,\ldots,\varepsilon_p)$ and $\varepsilon_{ij}$ are i.i.d. with mean $0$ and variance $1$, and $\|\varepsilon_{ij}\|_{q}\le C_q$ for each $i\in \mathbb Z$ and $1\le j\le p$ with some constants $q>2$ and $C_q>0$.  The vector linear process is commonly seen in literature and application~\cite{Ltkepohl2007}. It includes the time-varying VAR model where $A_m(t)=A(t)^m$ as a special example.

Suppose that the coefficient matrices $A_m(t)=(a_{m,jk}(t))_{1\le j,k\le p}, m=0,1,\ldots$ satisfy the following condition.
\begin{enumerate}
\item[(A1)] For each $1\le j,k\le p$, $a_{m,jk}(t)\in\mathcal C^2[0,1].$
\item[(A2)] For each $1\le j\le p$, there is a constant $C_{A,j}>0$ such that for each $t\in [0,1]$, $\sum_{k=1}^p a_{m,jk}(t)^2\le C_{A,j}(m+1)^{-2(A+1)}$ for all $m\ge0$.
\item[(A3)] For any $t,t'\in[0,1]$, $\sum_{m=0}^\infty\sum_{k=1}^p[a_{m,jk}(t)-a_{m,jk}(t')]^2\le L^2|t-t'|^2$ for each $j=1,\ldots,p$.
\end{enumerate}

Note that 
\begin{align*}
\sigma_{jk}(t)&=\sum_{m\ge 0}A_{m,j\cdot}^\top(t) A_{m,k\cdot}(t),\\
\Theta_{m,q,j}&\le 2C_q\sqrt{q-1}\sum_{m=0}^\infty (A_{m,j\cdot}^\top A_{m,j\cdot})^{1/2},\\
\|X_{ij}^\circ(t)-X_{ij}^\circ(t')\|^2&=\sum_{m=0}^\infty A_{m,j\cdot}\sum_{k=1}^p[a_{m,jk}(t)-a_{m,jk}(t')]^2,
\end{align*}
 where $A_{m,j\cdot}(t)$ is the $j$th row of $A_{m}(t)$. Under condition (A1)-(A3), one can easily verify that for each $1\le j,k\le p$, the process satisfies: (1)  $\sigma_{jk}(t)\in{\calC}^2[0,1]$; (2) $\|X_{\cdot,j}\|_{q,A}\le C_q \sqrt{(q-1)C_{A,j}}$ (due to Burkholder's inequality, cf. \cite{MR1476912}); (3) $\|H_j(\calF_0;t)-H_j(\calF_0;t')\|\le L|t-t'|$.

Conditions (A1)-(A3) implicitly impose smoothness in each entry of the coefficient matrices, sparseness in each column of the entry and evolution, and polynomial decay rate in the lag $m$ of each entry and its derivative. 
\end{ex}

For $1\le l\le \iota$, let $\delta_{jk}(t^{(l)}):=\sigma_{jk}(t^{(l)})-\sigma_{jk}(t^{(l)}-)$ and $\Delta(t^{(l)})=\big(\delta_{jk}(t^{(l)})\big)_{1\le j,k\le p}$, where $\sigma_{jk}(t^{(l)}-)=\lim_{t\to t^{(l)}-}\sigma_{jk}(t)$ is well-defined in view of (\ref{eqn:L-cond-1-ture-covariance-matrix}). We assume that the change points are separated and sizeable.
\begin{ass}[Separability and sizeability of change points]
\label{assumption:jump}
There exist positive constants $c_1\in(0,1)$ and $c_2>0$ independent of $n$ and $p$ such that $\max_{0\le l\le \iota}(t^{(l+1)}-t^{(l)})\ge c_1$ and $\delta(t_l):= |\Delta(t_l)|_\infty\ge c_2$.
\end{ass}

In the high-dimensional context, we assume that the inverse covariance matrices are sparse in the sense of their $L_1$ norms. %In particular, we consider the following class of matrices

\begin{ass}[Sparsity of precision matrices]
\label{assumption:sparsity}
The precision matrix $|\Omega(t)|_{L^1}\le \kappa_p$ for each $t\in[0,1]$, where $\kappa_p$ is allowed to grow with $p$.
\end{ass}

If we further assume that the eigenvalues of the covariance matrices are bounded from below and above, i.e., there exists a constant $0<c<1$ such that $c\le \inf_{t\in [0,1]}|\Sigma(t)|_2\le \sup_{t\in [0,1]}|\Sigma(t)|_2\le c^{-1}$, then the covariance matrices and precision matrices are well-conditioned. In particular, as $|\Omega(t)-\Omega(t')|\le c^{-2}|\Sigma(t)-\Sigma(t')|$, a small perturbation in the covariance matrix would guarantee a small change of the same order in the precision matrix under the spectral norm.

%If $|\Sigma(s) - \Sigma(t)|_2 \le L |s-t|$, then $|\Omega(s) - \Omega(t)|_2\le c^{-2}L |s-t|$.

\section{Method: change point estimation and support recovery}
\label{sec:method_network}
In graphical models (such as Gaussian graphical model or partial correlation graph), network structures relevant to correlations or partial correlations are second-order characteristics of the data distributions. Specifically, existence of edges coincides with non-zero entries of the inverse covariance matrix. We consider the dynamics of time series with both structural breaks and smooth changes. The piecewise stochastic Lipschitz continuity in Definition~\ref{defn:lp_function} allows the time series to have discontinuity in the covariance matrix function at time points $t^{(l)}, l=1,\dots,\iota$ (i.e., change points), while only smooth changes (i.e., twice continuous differentiability of the covariance matrix function in Assumptions~\ref{assumption:smoothness}) can occur between the change points.

In the presence of change points, we must first remove the change points before applying any smoothing procedures since $|\Omega(t)-\Omega(t-)|_\infty\ge |\Sigma(t)|_{L^1}^{-1} |\Sigma(t-)|_{L^1}^{-1}|\Delta(t)|_\infty$, i.e., a non-negligible abrupt change in the covariance matrix will result in a substantial change of the graph structure for sparse and smooth covariance matrices. 
%More importantly, recovering the graph structure using time series data that has abrupt jumps in the covariance matrix will induce remarkable error, as the estimates of the covariance components are themselves inaccurate. In other words, local smoothness in the covariance matrix is required for the inference of the graphical structure via constraint $l_1$ minimization estimator.
Thus our proposed graph recovery method consists of two steps: change point detection and support recovery.

Let $h \equiv h_n>0$ be a bandwidth parameter such that $h=o(1)$ and $n^{-1}=o(h)$, and ${\calD}_h(0) = \{ h, h+1/n, \dots, 1-h\}$ be a search grid in $(0,1)$. Define
\begin{equation}
\label{eqn:D_nonstationary}
D(s) = n^{-1} \left( \sum_{i=0}^{hn-1} \vX_{ns-i} \vX_{ns-i}^\top - \sum_{i=1}^{hn} \vX_{ns+i} \vX_{ns+i}^\top \right), \qquad s \in {\calD}_h(0).
\end{equation}
To estimate the change points, compute
\begin{equation}
\label{eqn:change-point-estimator-infty-nonstationary1}
\hat{s}_1 = \text{argmax}_{s \in {\calD}_h(0)} |D(s)|_\infty.
\end{equation}
The following steps are performed recursively. For $l=1,2,\ldots$, let
\begin{eqnarray}
\label{eqn:change-point-estimator-infty-nonstationary}
&{\calD}_h(l) = {\calD}_h(l-1)\cap\{\hat s_l-2h,\cdots,\hat s_l+2h\}^c,\\
&\hat{s}_{l+1} = \arg\max_{s \in {\calD}_h(l)} |D(s)|_\infty,
\end{eqnarray}
until the following criterion is attained:
\begin{align}\label{eqn:early_stop}
\max_{s\in {\calD}_h(l)} |D(s)|_\infty < \nu,
\end{align}
where $\nu$ is an early stopping threshold. The value of $\nu$ is determined in Section \ref{sec:theory_network}, which depends on the dimension and sample size, as well as the serial dependence level, tail condition and local smoothness. Since our method only utilizes data in the localized neighborhood, multiple change points can be estimated and ranked in a single pass, which offers some computational advantage than the binary segmentation algorithm \cite{cho2015multiple,fryzlewicz2014wild}.

%Future work: The choice of $\nu$ is equivalent to the construction of simultaneous confidence band of $\delta(t)$, or the hypothesis test problem of
%\[
%H_0: \iota=0\qquad\mbox{ v.s. } H_A: \iota\ge 1.
%\] 

%\begin{rmk}
%Similar change-point estimator as (\ref{eqn:change-point-estimator-infty-nonstationary1}) using the maximum aggregation appeared in \cite{groenkapetaniosprice2013}. Our estimator is also related to the binary segmentation algorithm proposed in \cite{cho2015multiple}, where the piecewise constant models are studied and a thresholded $\ell^1$ aggregation is considered, where an additional thresholding parameter is introduced. Here, as a key difference, we use the maximum aggregation strategy, which is tuning-free and whose consistency is guaranteed by Theorem \ref{theorem:piecewise-nonstationary-rate-block-version}. \qed
%\end{rmk}

Once the change points are claimed, in the second step,we consider recovering the networks from the locally stationary time series before and after the structural breaks. In~\cite{MR2847973}, where $X_i,i=1,\ldots,n$ are assumed with an identical covariance matrix, the precision matrix $\hat \Omega$ is estimated as,
\begin{align}\label{eq:clime}
    \hat\Omega_\lambda=\arg\min_{\Omega\in \mathbb R^{p\times p}} |\Omega|_1\quad \mbox{s.t. }|\hat\Sigma\Omega-\Id_p|_\infty\le \lambda,
\end{align}
where $\hat\Sigma$ is the sample covariance matrix. Inspired by (\ref{eq:clime}), we apply a kernelized time-varying (tv-) CLIME estimator for the covariance matrix functions of the multiple pieces of locally stationary processes before and after the structural breaks. Let
\begin{align}
\label{eqn:samplecov}
\hat\Sigma(t)=\sum_{i=1}^{n} w(t,t_i) \vX_i \vX_i^\top,
\end{align}
where
\begin{align}\label{eqn:kernel weight}
w(t,i)=\frac{K_b(t_i,t)}{\sum_{i=1}^n K_b(t_i,t)}
\end{align}
and $K_b(u,v)=K(|u-v|/ b )/ b $. The bandwidth parameter $b$ satisfies that $b=o(1)$ and $n^{-1}=o(b)$. Denote $B_n=nb$. The kernel function $K(\cdot)$ is chosen to have properties as follows.
\begin{ass}[Regularity of kernel function]\label{assumption:kernel}
The kernel function $K(\cdot)$ is non-negative, symmetric, and Lipschitz continuous with bounded support in $[-1,1]$, and that $\int_{-1}^1K(u)du=1$.
\end{ass}

Assumption \ref{assumption:kernel} is a common requirement on the kernel functions and  can be fulfilled by a range of kernel functions such as the uniform kernel, triangular kernel, and the Epanechnikov kernel. Now the tv-CLIME estimator of the precision matrix $\Omega(t)$ is defined by $\tilde\Omega(t)=\left(\tilde{\omega}_{jk}(t)\right)_{1\le j,k\le p}$, where  
$\tilde{\omega}_{jk}(t)=\min(\hat\omega_{jk}(t),\hat\omega_{kj}(t))$, and $\hat \Omega(t)\equiv \hat\Omega_\lambda(t)=(\hat\omega_{jk}(t))_{1\le j,k\le p}$,
\begin{align}
\label{eqn:CLIME}
\hat\Omega_\lambda(t)=\arg\min_{\Omega\in \mathbb R^{p\times p}} |\Omega|_1\quad \mbox{s.t. }|\hat\Sigma(t)\Omega-\Id_p|_\infty\le \lambda.
\end{align} 
Similar hybridized kernel smoothing and CLIME method for estimating the sparse and smooth transition matrices in high-dimensional VAR model has been considered in~\cite{ding2017}, where change point is not considered. Thus in the current setting we need to carefully control effect of (consistently) removing the change points before smoothing.

Then, the network is estimated by the ``effective support" defined as follows.
\begin{align}
\label{eqn:graph_estimation}
\hat G(t;u)=(\hat g_{jk}(t;u))_{1\le j,k\le p}, \quad\mbox{where   } \, \hat g_{jk}(t;u)=\ind{|\tilde{\omega}_{jk}(t)| \ge u}.
\end{align}

 It should be noted that the (vanilla) kernel smoothing estimator~\eqref{eqn:samplecov} of the covariance matrix does not adjust for the boundary effect due to the change points in the covariance matrice function. Thus, in the neighborhood of the change points, larger bias can be induced in estimating $\Sigma(t)$ by $\hat \Sigma(t)$. 
As a remedy, we apply the following reflection procedure for boundary correction. Suppose $t\in\hat{\mathcal{T}}_{b+ h^2}(j)$ for $1\le j\le \iota$, Denote $\hat{{\calT}}_{d}(j):=[\hat s_j-d,\hat s_j+d)$ for $d\in (0,1)$. We replace (\ref{eqn:samplecov}) by
\[
\hat\Sigma(t)=\sum_{i=1}^{n} w(t,t_i) \breve\vx_i \breve\vx_i^\top ,
\]
and then apply the rest of the tv-CLIME approach. Here
\begin{align}
\breve\vx_i=\begin{cases}
\vx_i&\mbox{if }(i-\hat s_jn)(t-\hat s_jn)\ge 0;\\
\vx_{2\hat s_jn-i}&\mbox{otherwise}.
\end{cases}
\end{align}

\section{Theoretical results}
\label{sec:theory_network}

In this section, we derive the theoretical guarratees for the change point estimation and graph support recovery. Roughly speaking, Proposition \ref{prop:precmx_smooth} and \ref{prop:partial_support_recovery} below show that under appropriate conditions, if each element of the covariance matrix varies smoothly in time, one can obtain accurate snapshot estimation of the precision matrices as well as  the time-varying graphs with high probability via the proposed kernel smoothed constrained $l_1$ minimization approach.

%Our first result is that under appropriate conditions, element-wise local smoothness of the covariance matrix is sufficient for the consistency of support recovery via tv-CLIME. The latter observation motivates us  to first identify the abrupt changes in covariance matrices in the sense of the max norm.  

Define $J_{q,A}(n,p) = M_{X,q} (p  \varpi_{q,A}(n)) ^{1/q}$, where $\varpi_{q,A}(n) = n, n(\log n)^{1+2q}, n^{q/2-Aq}$ if $A > 1/2-1/q$, $A = 1/2-1/q$, and $0 < A < 1/2-1/q$, respectively.

\begin{prop}[Rate of convergence for estimating precision matrices: pointwise and uniform]\label{prop:precmx_smooth}
Suppose Assumptions \ref{assumption:dep}, \ref{assumption:sparsity} and \ref{assumption:kernel} hold with $\iota=0$. Let $B_n=bn$ for $n^{-1}=o(b)$ and $b=o(1)$.  

\begin{enumerate}
\item[(i)] {\bf Pointwise.} Choose the parameter $\lambda^\circ \ge C\kappa_p ( b ^2+B_n^{-1}J_{q,A}(B_n,p)+N_X (\log{p}/B_n)^{1/2})$ in the tv-CLIME estimator $\hat\Omega_{\lambda^\circ}(t)$ in (\ref{eqn:CLIME}),  where $C$ is a sufficiently large constant independent of $n$ and $p$. Then for any $t\in [b,1-b]$, we have
\begin{align}\label{eqn:pre_dev}
|\hat\Omega_{\lambda^\circ}(t)-\Omega(t)|_\infty&= O_\P( \kappa_p\lambda^\circ).
\end{align}

\item[(ii)] {\bf Uniform.} Choose $\lambda^\diamond \ge C \kappa_p \left( b ^2+B_n^{-1} J_{q,A}(n,p)+N_X B_n^{-1}(n\log(p))^{1/2}\right)$ in the tv-CLIME estimator $\hat\Omega_{\lambda^\circ}(t)$ in (\ref{eqn:CLIME}),  where $C$ is a sufficiently large constant independent of $n$ and $p$. Then we have 
\begin{align}
\label{eqn:pre_dev_unif}
\sup_{t \in [b,1-b]} |\hat\Omega_{\lambda^\diamond}(t)-\Omega(t)|_\infty = O_\P( \kappa_p\lambda^\diamond ).
\end{align}
\end{enumerate}
\end{prop}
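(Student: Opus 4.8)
The plan is to reduce both claims to a maximal deviation bound for the kernel-smoothed sample covariance $\hat\Sigma(t)$ in the entrywise sup-norm, via the deterministic CLIME algebra, and then to establish that deviation bound with the dependence-adjusted norm machinery. First I would set up the CLIME reduction. Fix $t$ and work on the event $\calE(t)=\{\kappa_p|\hat\Sigma(t)-\Sigma(t)|_\infty\le\lambda\}$. Since $\Sigma(t)\Omega(t)=\Id_p$ and $|AB|_\infty\le|A|_\infty|B|_{L_1}$, on $\calE(t)$ we get $|\hat\Sigma(t)\Omega(t)-\Id_p|_\infty=|(\hat\Sigma(t)-\Sigma(t))\Omega(t)|_\infty\le|\hat\Sigma(t)-\Sigma(t)|_\infty\,|\Omega(t)|_{L_1}\le\lambda$, so $\Omega(t)$ is feasible for (\ref{eqn:CLIME}). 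Because the program decouples across columns and $\hat\Omega_\lambda(t)$ minimizes $|\cdot|_1$ column-wise, feasibility of $\Omega(t)$ forces $|\hat\Omega_\lambda(t)|_{L_1}\le|\Omega(t)|_{L_1}\le\kappa_p$. Writing $\hat\Omega-\Omega=\Omega\,\Sigma(\hat\Omega-\Omega)=\Omega[(\Sigma-\hat\Sigma)\hat\Omega+(\hat\Sigma\hat\Omega-\Id_p)]$ and using feasibility of $\hat\Omega_\lambda(t)$ (so $|\hat\Sigma\hat\Omega-\Id_p|_\infty\le\lambda$) gives, on $\calE(t)$,
\[
|\hat\Omega_\lambda(t)-\Omega(t)|_\infty\le\kappa_p\big(|\Sigma(t)-\hat\Sigma(t)|_\infty\,\kappa_p+\lambda\big)\le 2\kappa_p\lambda.
\]
Thus it suffices to show that, for the stated $\lambda$, the event $\calE(t)$ (pointwise), respectively $\bigcap_{t\in[b,1-b]}\calE(t)$ (uniform), holds with probability tending to one; equivalently $|\hat\Sigma(t)-\Sigma(t)|_\infty=O_\P(\lambda^\circ/\kappa_p)$ pointwise and the analogous supremum bound with $\lambda^\diamond/\kappa_p$.

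Second, I would decompose $\hat\Sigma(t)-\Sigma(t)$ into bias and stochastic parts. Since $\E[\vX_i\vX_i^\top]=\Sigma(t_i)$, the bias is $\sum_i w(t,t_i)[\Sigma(t_i)-\Sigma(t)]$. With $\iota=0$ and the twice continuous differentiability of $\sigma_{jk}$ (Assumption~\ref{assumption:smoothness}), a second-order Taylor expansion around $t$ together with the symmetry of $K$ (Assumption~\ref{assumption:kernel}), which annihilates the first-order term, yields an entrywise bias of order $b^2$ uniformly in $t\in[b,1-b]$; this produces the $b^2$ summand in both $\lambda^\circ$ and $\lambda^\diamond$. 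The dominant remaining work is the stochastic term $\sum_i w(t,t_i)\big(X_{ij}X_{ik}-\sigma_{jk}(t_i)\big)$, to be controlled uniformly over the $p^2$ pairs $(j,k)$.

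Third, and this is the crux, I would control the stochastic term by a Nagaev-type maximal inequality for the centered product process $Y_{i,jk}(t):=X^\circ_{ij}(t)X^\circ_{ik}(t)-\sigma_{jk}(t)$. Its functional dependence measure is bounded through the product rule: by Cauchy--Schwarz, $\|Y_{i,jk}-Y_{i,jk,\{m\}}\|_q\le\|X_{ij}\|_{2q}\,\theta_{m,2q,k}+\theta_{m,2q,j}\,\|X_{ik}\|_{2q}$, so the $2q$-th moment control on $X$ in Assumption~\ref{assumption:dep} yields $q$-th moment control and a finite dependence-adjusted norm for the products, whose aggregated and pointwise magnitudes are exactly $M_{X,q}$ and $N_X$. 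Applying a moderate-deviation bound for weighted sums of such dependent variables---with the kernel weights concentrating mass on an effective window of size $B_n=nb$---and a union bound over the $p^2$ entries splits the deviation into a polynomial-tail contribution of order $B_n^{-1}J_{q,A}(B_n,p)$, where the three regimes of $\varpi_{q,A}$ record whether the moment tail or the accumulated dependence dominates, and a Gaussian-type contribution of order $N_X(\log p/B_n)^{1/2}=N_X B_n^{-1}(B_n\log p)^{1/2}$. Summing the three pieces matches $\lambda^\circ/\kappa_p$ and completes part (i).

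Finally, for the uniform statement (ii) I would pass from fixed $t$ to the supremum over $[b,1-b]$ by a grid-plus-increment argument: discretize at the sampling resolution $1/n$, control the fluctuation of $t\mapsto\hat\Sigma(t)$ between neighbouring grid points using the Lipschitz continuity of $K$ (Assumption~\ref{assumption:kernel}), and take a union bound over the $O(n)$ grid points in addition to the $p^2$ entries. This keeps the $B_n^{-1}$ normalization coming from the kernel weights but upgrades the deviation magnitudes from the window scale to the full-sample scale, $J_{q,A}(B_n,p)\to J_{q,A}(n,p)$ and $(B_n\log p)^{1/2}\to(n\log p)^{1/2}$, giving the terms $B_n^{-1}J_{q,A}(n,p)$ and $N_X B_n^{-1}(n\log p)^{1/2}$; together with the unchanged $b^2$ bias these reproduce $\lambda^\diamond/\kappa_p$. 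The principal obstacle throughout is precisely this stochastic control---a maximal inequality simultaneously uniform over the $p^2$ entries, uniform over $t$, and valid under only polynomial ($2q$-th) moments with genuinely nonlinear temporal dependence; by comparison the CLIME algebra and the $b^2$ bias are routine.
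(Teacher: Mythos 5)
Your part (i) is correct and is essentially the paper's own argument: the CLIME feasibility/minimality algebra (your display is the paper's bound (\ref{eqn:MT1236p}) up to constants), the second-order Taylor bias of order $b^2 + B_n^{-1}$, and a Nagaev-type deviation inequality for the products $X_{ij}X_{ik}$, whose dependence-adjusted norms are controlled by Cauchy--Schwarz exactly as in Lemma \ref{lemma:Y_i,jk_large-dev}, followed by a union bound over the $p^2$ entries.

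Part (ii) has a genuine gap. You claim that a union bound over the $O(n)$ grid points ``upgrades'' $J_{q,A}(B_n,p)\to J_{q,A}(n,p)$. A union bound does not do this: it multiplies the pointwise tail by the number of grid points. Starting from the pointwise inequality
\begin{equation*}
\sup_{t}\P\Big(\Big|\textstyle\sum_i w(t,t_i)\big(\vX_i\vX_i^\top-\E \vX_i\vX_i^\top\big)\Big|_\infty \ge x\Big)
\le C_1 \nu_{2q}^q\,\frac{p\,\varpi_{q,A}(B_n)M_{X,q}^q}{B_n^q x^q} + C_2 p^2 \exp\Big(-C_3\frac{B_n x^2}{\nu_4^2 N_X^2}\Big),
\end{equation*}
the union over $\asymp n$ grid points turns the polynomial part into $n p\,\varpi_{q,A}(B_n)M_{X,q}^q\nu_{2q}^q/(B_n^q x^q)$, which forces $x \gtrsim \nu_{2q}M_{X,q}B_n^{-1}\big(np\,\varpi_{q,A}(B_n)\big)^{1/q}$, not the claimed $\nu_{2q}M_{X,q}B_n^{-1}\big(p\,\varpi_{q,A}(n)\big)^{1/q}$. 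In the main regime $A>1/2-1/q$, where $\varpi_{q,A}(m)=m$, your rate carries the extra factor $B_n^{1/q}$, and it is not in general absorbed by the Gaussian term $N_X B_n^{-1}(n\log p)^{1/2}$ of $\lambda^\diamond$ (take, e.g., $p$ polynomial in $n$, bounded coordinatewise DANs so that $M_{X,q}\asymp p^{1/q}$, and $q$ close to $2$). This loss is intrinsic to discretization under polynomial moments: a union bound over $N$ points costs a factor $N^{1/q}$ in the deviation scale, which is why heavy-tailed maxima cannot be handled the way sub-Gaussian ones are. (Your Gaussian term and your increment control between neighbouring grid points are fine; only the polynomial tail breaks.)

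What your proposal is missing is the paper's device for avoiding any union over $t$: summation by parts. Since $\sup_t\sum_i|w(t,t_i)-w(t,t_{i+1})|\asymp B_n^{-1}$ under Assumption \ref{assumption:kernel}, one has $\sup_t\big|\sum_i w(t,t_i)\big(Y_i-\E Y_i\big)\big|\lesssim B_n^{-1}\max_{1\le i\le n}|S_i|$ with $S_i$ the \emph{global} partial sums, and the paper then proves a maximal Nagaev-type inequality for $\max_i|S_i|$ (Lemma \ref{lem:nagaev}) via martingale projections $\Proj_k$, dyadic blocking in the lag, and Doob/Ottaviani plus Burkholder inequalities. That is where $\varpi_{q,A}(n)$ and the term $N_X B_n^{-1}(n\log p)^{1/2}$ in $\lambda^\diamond$ actually come from; without this (or an equivalent maximal inequality for partial sums), your argument proves (ii) only with a strictly larger penalty level.
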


The optimal order of the bandwidth parameter $b= b_\sharp$ in~\eqref{eqn:pre_dev_unif} is the solution to the following equation:
\begin{eqnarray*}
b^2 &=& B_n^{-1}\max( J_{q,A}(n,p), \, N_X(n\log(p^2))^{1/2}),
\end{eqnarray*}
which implies that the closed-form expression for $b_\sharp$ is given by
\begin{align*}
b_\sharp=C_1\big(n^{-1}J_{q,A}(n,p)\big)^{1/3}+C_2 N_X^{1/3}n^{-1/6}\log(p)^{1/6}
\end{align*}
for some constants $C_1$ and $C_2$ that are independent of $n$ and $p$.

Given a finite sample, to distinguish the small entries in the precision matrix from the noise is challenging. Since a smaller magnitude of a certain element of the precision matrix implies a weaker connection of the edge in the graphical model, we instead consider the estimation of {\it significant} edges in the graph. Define the set of {\it significant} edges at level $u$ as ${\calE}^*(t;u)=\rBr{(j,k):g^*_{jk}(t;u)\ne0}$, where 
\[
g^*_{jk}(t;u)=\ind{|\omega_{jk}(t)|>u}.
\]
Then, as a consequence of (\ref{eqn:pre_dev_unif}),  we have the following support recovery consistency result.

\begin{prop}[Consistency of support recovery: significant edges]
\label{prop:partial_support_recovery}
Choose $u$ as $u_\sharp = C_0 \kappa_p^2 b_\sharp^2$, where $C_0$ is taken as a sufficiently large constant independent of $n$ and $p$. Suppose that $u_\sharp=o(1)$ as $n,p\to\infty$. Then under conditions of Proposition \ref{prop:precmx_smooth}, we have that as $n,p\to\infty$,
\begin{align}\label{eqn:fpos}
\P\Big(\sup_{t \in [b,1-b]} \sum_{(j,k)\in {\calE}^c(t)}\ind{\hat g_{jk}(t;u_\sharp)\ne 0}\ne 0\Big)\to 0,\\\label{eqn:fneg}
\P\Big(\sup_{t \in [b,1-b]} \sum_{(j,k)\in {\calE}^*(t;2u_\sharp)}\ind{\hat g_{jk}(t;u_\sharp)=0}\ne 0\Big)\to0.
\end{align}
\end{prop}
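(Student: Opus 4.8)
The plan is to deduce both statements from the uniform deviation bound of Proposition~\ref{prop:precmx_smooth}(ii) by a thresholding argument, exploiting that the threshold $u_\sharp$ is a large constant multiple of the uniform estimation error. First I would record the order of that error. With the optimal bandwidth $b=b_\sharp$ defined just after Proposition~\ref{prop:precmx_smooth}, the three terms in $\lambda^\diamond$ are all of order at most $\kappa_p b_\sharp^2$, the leading $b^2$ term being exactly of this order, so that $\lambda^\diamond\asymp\kappa_p b_\sharp^2$ and hence $\kappa_p\lambda^\diamond\asymp\kappa_p^2 b_\sharp^2$. Thus Proposition~\ref{prop:precmx_smooth}(ii) gives
\[
\sup_{t\in[b,1-b]}|\hat\Omega_{\lambda^\diamond}(t)-\Omega(t)|_\infty = O_\P(\kappa_p^2 b_\sharp^2)=O_\P(u_\sharp/C_0).
\]
Taking $C_0$ sufficiently large, the good event
\[
\mathcal{G}_n=\Big\{\sup_{t\in[b,1-b]}|\hat\Omega_{\lambda^\diamond}(t)-\Omega(t)|_\infty\le u_\sharp/2\Big\}
\]
satisfies $\P(\mathcal{G}_n)\to1$ as $n,p\to\infty$.

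Next I would dispose of the symmetrization. Since the true precision matrix is symmetric, $\omega_{jk}(t)=\omega_{kj}(t)$, so on $\mathcal{G}_n$ both $|\hat\omega_{jk}(t)-\omega_{jk}(t)|\le u_\sharp/2$ and $|\hat\omega_{kj}(t)-\omega_{jk}(t)|\le u_\sharp/2$ hold for every pair $(j,k)$ and every $t$. As $\tilde\omega_{jk}(t)=\min(\hat\omega_{jk}(t),\hat\omega_{kj}(t))$ equals one of these two estimates, each within $u_\sharp/2$ of $\omega_{jk}(t)$, it follows that $|\tilde\omega_{jk}(t)-\omega_{jk}(t)|\le u_\sharp/2$ uniformly in $(j,k,t)$ on $\mathcal{G}_n$; the $\min$-symmetrization does not degrade the rate.

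Both conclusions then become deterministic on $\mathcal{G}_n$, requiring no further union bound since the uniformity over $t$ and the maximum over entries are already packaged in the $\sup_t|\cdot|_\infty$ of Proposition~\ref{prop:precmx_smooth}(ii). For the false-positive bound \eqref{eqn:fpos}, if $(j,k)\in\calE^c(t)$ then $\omega_{jk}(t)=0$, whence $|\tilde\omega_{jk}(t)|\le u_\sharp/2<u_\sharp$ and $\hat g_{jk}(t;u_\sharp)=0$; the inner sum vanishes for every $t$, so its supremum is $0$ and $\P\big(\sup_t\sum_{(j,k)\in\calE^c(t)}\ind{\hat g_{jk}(t;u_\sharp)\ne0}\ne0\big)\le\P(\mathcal{G}_n^c)\to0$. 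For the false-negative bound \eqref{eqn:fneg}, if $(j,k)\in\calE^*(t;2u_\sharp)$ then $|\omega_{jk}(t)|>2u_\sharp$, so by the triangle inequality $|\tilde\omega_{jk}(t)|>2u_\sharp-u_\sharp/2>u_\sharp$, forcing $\hat g_{jk}(t;u_\sharp)=1$; again the inner sum vanishes identically and \eqref{eqn:fneg} follows from $\P(\mathcal{G}_n^c)\to0$. The gap between the significance level $2u_\sharp$ and the threshold $u_\sharp$, together with the error budget $u_\sharp/2$, is precisely what produces the clean separation.

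The only genuinely delicate point is the passage from the $O_\P$ rate to a probability of $\mathcal{G}_n$ tending to one for the fixed constant $C_0$ appearing in the statement; this is not automatic from $O_\P$ alone, but is supplied by the decaying (Nagaev/Markov-type) deviation bounds on $|\hat\Sigma(t)-\Sigma(t)|_\infty$ underlying the proof of Proposition~\ref{prop:precmx_smooth}(ii), whose right-hand sides vanish as $n,p\to\infty$ once the error is inflated by the factor $C_0/2$. I expect the bookkeeping of these constants, rather than the thresholding comparisons themselves, to be the main obstacle; everything after the definition of $\mathcal{G}_n$ is a deterministic inequality.
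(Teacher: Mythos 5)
Your argument is the same one the paper uses: the paper's entire proof of Proposition~\ref{prop:partial_support_recovery} is the single sentence that it is ``an immediate result of (\ref{eqn:pre_dev_unif})'', and your deterministic thresholding --- a good event with uniform error at most $u_\sharp/2$, zero entries staying strictly below $u_\sharp$, entries of magnitude above $2u_\sharp$ staying above $u_\sharp$ --- is exactly the fleshed-out version of that sentence. Your treatment is in fact more complete than the paper's on two counts: you verify that $\kappa_p\lambda^\diamond\asymp\kappa_p^2b_\sharp^2$ at the optimal bandwidth, and you handle the $\min$-symmetrization $\tilde\omega_{jk}(t)=\min(\hat\omega_{jk}(t),\hat\omega_{kj}(t))$, which the paper skips even though $\hat g_{jk}(t;u)$ in (\ref{eqn:graph_estimation}) is defined through $\tilde\omega_{jk}$ rather than the quantity $\hat\omega_{jk}$ that (\ref{eqn:pre_dev_unif}) controls.

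The one step in your write-up that would not survive scrutiny is the last one. You correctly observe that an $O_\P$ bound with a \emph{fixed} constant $C_0$ does not by itself give $\P(\mathcal{G}_n)\to1$, but your proposed remedy --- that the deviation bounds underlying Proposition~\ref{prop:precmx_smooth}(ii) have right-hand sides that vanish once the error is inflated by $C_0/2$ --- is only half true. The exponential term in Lemma~\ref{lemma:Y_i,jk_large-dev} does vanish: at $x\asymp C_0 b_\sharp^2\ge C_0 N_XB_n^{-1}(n\log p^2)^{1/2}$ it becomes of order $p^{2-cC_0^2}\to0$. But the polynomial Nagaev term does not: writing $p\varpi_{q,A}(n)M_{X,q}^q=J_{q,A}(n,p)^q$, that term equals $C_1\nu_{2q}^q\rbr{J_{q,A}(n,p)/(B_nx)}^q$, and since $b_\sharp$ is defined precisely by balancing $b^2$ against $B_n^{-1}\max\rbr{J_{q,A}(n,p),\,N_X(n\log p^2)^{1/2}}$, in the regime where $J_{q,A}(n,p)$ attains that maximum the term evaluated at $x\asymp C_0b_\sharp^2$ equals $C_1(\nu_{2q}/C_0)^q$ --- small for large $C_0$, but constant in $(n,p)$. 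So with fixed $C_0$ this route yields only $\limsup_{n,p}\P(\mathcal{G}_n^c)\le C_1(\nu_{2q}/C_0)^q$, not convergence to zero. To be fair, this defect is inherited from the paper itself: the ``$\to0$'' conclusion of the proposition cannot be extracted from the $O_\P$ statement (\ref{eqn:pre_dev_unif}) either, and the honest formulations are either ``for every $\varepsilon>0$ there exists $C_0$ with $\limsup\le\varepsilon$'' or a threshold inflated by a slowly diverging factor. Your proof at least isolates the issue explicitly; the paper's does not.
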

Proposition \ref{prop:partial_support_recovery} shows that the pattern of significant edges in the time-varying true graphs $G(t), t\in[b,1-b]$, can be correctly recovered with high probability. However, it is still an open question to what extent the edges with magnitude below $u$ can be consistently estimated, which can be naturally studied in the multiple hypothesis testing framework. Nonetheless, hypothesis testing for graphical models on the nonstationary high-dimensional time series is rather challenging. We leave it as a future problem.

Propositions \ref{prop:precmx_smooth} and \ref{prop:partial_support_recovery} together yield that consistent estimation of the precision matrices and the graphs can be achieved before and after the change points.  Now we provide theoretical result of the change point estimation. Theorem \ref {thm:jumppointsestimation} below shows that if the change points are separated and sizeable, then we can consistently identify them via the single pass segmentation approach under suitable conditions. Denote
\[
h_\diamond=C_1\big(n^{-1}J_{q,A}(n,p)\big)^{1/3}+C_2 N_X^{1/3}n^{-1/6}\log(p)^{1/6},
\]
where $C_1$ and $C_2$ are constants independent of $n$ and $p$.

\begin{thm}[Consistency of change point estimation]
\label{thm:jumppointsestimation}
Assume $\vX_i\in \mathbb R^p$ admits the form (\ref{eqn:data_local}). Suppose that Assumptions \ref{assumption:dep} to \ref{assumption:jump} are satisfied. Choose the bandwidth $h= h_\diamond$, and $\nu=(1+L) h_\diamond^2$ in (\ref{eqn:D_nonstationary}) and (\ref{eqn:early_stop}) respectively. Assume that $h_\diamond=o(1)$ as $n,p\to\infty$. We have that there exist constants $C_1,C_2,C_3$ independent of $n$ and $p$ such that
\begin{eqnarray}\label{eqn:jumpnumber}
\P(|\hat{\iota}-\iota|> 0)\le C_1\Big({p \varpi_{q,A}(n) M_{X,q}^q \nu_{2q}^q \over n^{q} c_2^q}\Big)^{1/3}+C_2p^2\exp\Big\{-C_3({n\log^2(p)\over N_X^2})^{1/3}\Big\}.
\end{eqnarray}
Furthermore, on the event $\{\iota=\hat\iota\}$, the ordered change-point estimator $(\hat{s}_{(1)}<\hat{s}_{(2)}<\cdots<\hat{s}_{(\hat \iota)})$ defined in (\ref{eqn:change-point-estimator-infty-nonstationary}) satisfies
\begin{eqnarray}
\label{eqn:jumppoint}
&\max_{1\le j\le \iota}|\hat{s}_{(j)}-t^{(j)}| = O_\Prob(h^2_\diamond).
\end{eqnarray}
\end{thm}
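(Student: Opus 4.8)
The plan is to compare the random statistic $D(s)$ with its deterministic mean $\mu(s):=\E[D(s)]$ and to exploit a sharp dichotomy in the shape of $|\mu(s)|_\infty$. Since the data are mean zero with $\E[\vX_i\vX_i^\top]=\Sigma(i/n)$, one has $\mu(s)=n^{-1}\sum_{i=0}^{hn-1}\Sigma(s-i/n)-n^{-1}\sum_{i=1}^{hn}\Sigma(s+i/n)$, which I would analyze in two regimes. First, for any grid point $s$ whose distance to every break $t^{(l)}$ exceeds $h$, both localized windows lie inside a single smooth segment, so pairing the summands and invoking the piecewise Lipschitz bound (\ref{eqn:L-cond-1-ture-covariance-matrix}) together with the $\calC^2$ smoothness of Assumption~\ref{assumption:smoothness} yields $|\mu(s)|_\infty\le CLh^2$. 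Second, writing $s=t^{(l)}+\tau$ with $|\tau|\le h$, the left window straddles the break so that a fraction $(h-|\tau|)$ of it sees $\Sigma(t^{(l)}-)$ while the remainder and the entire right window see $\Sigma(t^{(l)})$; this produces the ``tent'' identity $|\mu(s)|_\infty=(h-|\tau|)\,|\Delta(t^{(l)})|_\infty+O(h^2)$, peaking at $s=t^{(l)}$ with height at least $hc_2-Ch^2$ by Assumption~\ref{assumption:jump}. Because the search grid has spacing $1/n=o(h)$, some grid point lies within the peak of each tent, and for $n$ large all breaks lie inside $[h,1-h]$ by the separation assumption.

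The crux is a uniform stochastic bound $\sup_{s\in\calD_h(0)}|D(s)-\mu(s)|_\infty\le\gamma$ on a high-probability event $\mathcal G$, with $\gamma$ of the same order $h_\diamond^2$ as the smooth baseline. This is the step I expect to be the main obstacle: the entries of $D(s)$ are localized sums of the heavy-tailed, temporally dependent products $(\vX_i\vX_i^\top)_{jk}-\sigma_{jk}$, so a naive union bound fails. I would reuse the machinery behind Proposition~\ref{prop:precmx_smooth}(ii), splitting each centered product into a truncated (bounded) part handled by a Bernstein/Freedman-type inequality that exploits the dependence-adjusted norm $N_X$, and a tail part handled by a Nagaev/Markov bound calibrated to the $q$-th moment through $M_{X,q}$ and $\nu_{2q}$. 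Taking a union over the $O(n)$ grid points and the $p^2$ entries and specializing the effective sample size to $B_n=hn$ at $h=h_\diamond$ yields exactly the two terms of (\ref{eqn:jumpnumber}): the polynomial term $(p\varpi_{q,A}(n)M_{X,q}^q\nu_{2q}^q/(n^qc_2^q))^{1/3}$ from the tail part and the sub-exponential term $p^2\exp\{-C_3(n\log^2(p)/N_X^2)^{1/3}\}$ from the truncated part.

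On the event $\mathcal G$ I would conclude $\hat\iota=\iota$ as follows. The choice $\nu=(1+L)h_\diamond^2$ sits strictly between the smooth baseline plus fluctuation $CLh^2+\gamma$ and the peak signal $hc_2-Ch^2-\gamma$, since $h_\diamond=o(1)$ forces $h_\diamond c_2\gg h_\diamond^2$. Hence $|D(s)|_\infty<\nu$ at every grid point more than $h$ from all breaks (no false positives), while $|D(s)|_\infty>\nu$ on a tent of half-width at most $h$ around each break (every break is detected). By Assumption~\ref{assumption:jump} distinct breaks are separated by at least $c_1\gg 4h$, so these tents are disjoint and removing a $2h$-neighborhood around any detected peak clears exactly one tent without touching the others. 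The recursion therefore extracts one estimate from each tent and halts after precisely $\iota$ steps, when (\ref{eqn:early_stop}) is first met; thus $\P(|\hat\iota-\iota|>0)\le\P(\mathcal G^c)$ is bounded by the right-hand side of (\ref{eqn:jumpnumber}).

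Finally, on $\mathcal G\cap\{\hat\iota=\iota\}$ I would pin down the locations. Each $\hat s_{(l)}$ maximizes $|D(\cdot)|_\infty$ over the tent around $t^{(l)}$, so it must lie inside that tent, since outside it $|D|_\infty\lesssim Lh^2+\gamma$ is too small to be the argmax. Comparing the argmax value with the value at the grid point nearest $t^{(l)}$, and inserting the tent identity together with $|D-\mu|_\infty\le\gamma$ on both sides, gives $(h-|\hat s_{(l)}-t^{(l)}|)\,|\Delta(t^{(l)})|_\infty\ge h\,|\Delta(t^{(l)})|_\infty-O(h^2)-2\gamma$, hence $|\hat s_{(l)}-t^{(l)}|\le(O(h^2)+2\gamma)/c_2=O(h_\diamond^2)$. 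Taking the maximum over $l$ yields $\max_{1\le j\le\iota}|\hat s_{(j)}-t^{(j)}|=O_\P(h_\diamond^2)$, as claimed.
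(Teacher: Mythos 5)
Your overall architecture coincides with the paper's proof: the tent-shaped mean $\D^\diamond(s)$ with Lipschitz approximation error $O(Lh^2)$ (the paper's (\ref{eqn:diff_stat_nonstat})), a high-probability event on which $\sup_s|D(s)-\D(s)|_\infty\lesssim h_\diamond^2$, the calibration of $\nu=(1+L)h_\diamond^2$ against the peak height $c_2h_\diamond$ to conclude $\hat\iota=\iota$, and the argmax-versus-peak comparison (the paper's (\ref{eqn:D_lower})--(\ref{eqn:D_upper})) to get $|\hat s_{(j)}-t^{(j)}|=O_\P(h_\diamond^2)$. Your disjoint-tents bookkeeping for the recursion is a harmless reorganization of the paper's induction over the magnitude-ordered jumps (the events $\calA_k$).

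The genuine gap sits in the step you yourself identify as the crux. You propose to get the uniform deviation bound from a per-point tail inequality followed by ``a union over the $O(n)$ grid points and the $p^2$ entries.'' With only a finite $q$-th moment, the union over grid points is fatal for the heavy-tailed part of the bound: the per-point polynomial term for $D(s)$ is of order $p\,\varpi_{q,A}(B_n)M_{X,q}^q\nu_{2q}^q/(n^qx^q)$, so a union over the grid produces $n\,\varpi_{q,A}(B_n)$ where the theorem needs $\varpi_{q,A}(n)$; for instance, when $A>1/2-1/q$ this is worse by the diverging factor $B_n=h_\diamond n$, so you would not obtain ``exactly the two terms'' of (\ref{eqn:jumpnumber}) but a strictly weaker bound requiring stronger scaling of $(n,p)$. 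An exponential tail can absorb a union over $n$ points at the cost of a $\log n$; a polynomial tail cannot. The paper avoids this by never union-bounding over $s$: uniformity in $s$ is built into Lemma \ref{lem:nagaev} through maximal inequalities (Ottaviani and Doob) applied to the partial-sum process inside a martingale-projection and dyadic-blocking decomposition, yielding (\ref{eqn:max-dev-Y}); this is transferred to the matrix statistic by a Bonferroni bound over the $p^2$ entries only (Lemma \ref{lemma:Y_i,jk_large-dev} and (\ref{eqn:large-deviation-infty})). Since you also say you would ``reuse the machinery behind Proposition \ref{prop:precmx_smooth}(ii),'' the fix is available to you --- but that machinery \emph{is} the maximal-inequality lemma, not a grid union bound; as written, your derivation of the event $\mathcal{G}$ does not go through at the stated rate.
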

Proposition \ref{prop:partial_support_recovery} and Theorem \ref{thm:jumppointsestimation} together indicate the consistency in the snapshot estimation of the time-varying graphs before and after the change points. In a close neighborhood of the change points, we have the following result for the recovery of the time-varying network. Denote ${\calS}:=\big[b_\sharp,1-b_\sharp]\cap(\cup_{1\le j\le \hat\iota}\hat{\calT}^c_{h_\diamond^2+b_\sharp}(j)\big)$ as the time intervals between the estimated change points, and ${\calN}:=[0,b_\sharp)\cup\big(\cup_{1\le j\le \hat\iota}(\hat{\calT}_{h_\diamond^2+b_\sharp}\cap\hat{\calT}^c_{h_\diamond^2})\big)\cup(1-b_\sharp,1]$ as the recoverable neighborhood of the jump.

\begin{thm}\label{thm:support_recovery}
Let Assumptions \ref{assumption:dep} to \ref{assumption:kernel} be satisfied.  We have the following results as $n,p\to \infty$.
\begin{enumerate}
\item[(i)] {\bf Between change points.} For $t\in{\calS}$, take $b=b_\sharp$ and $u=u_\sharp$, where $b_\sharp$ and $u_\sharp$ are defined in Proposition \ref{prop:partial_support_recovery}. Suppose $u_\sharp=o(1)$. we have
\begin{align}\label{eqn:cov_boundary_unif}
&\sup_{t\in{\calS}}\max_{j,k}|\hat\sigma_{j,k}(t)-\sigma_{j,k}(t)|= O_\P( b^2_\sharp).
\end{align}
Choose the penalty parameter as $\lambda_\sharp:=C_1\kappa_pb^2_\sharp$, where $C_1$ is a constant independent of $n$ and $p$. Then
\begin{align*}
%\label{pre_boundary_unif}
\sup_{t\in{\calS}}|\hat{\Omega}_{\lambda_\sharp}(t)-\Omega(t)|_\infty= O_\P( \kappa_p^2  b_\sharp ^2).
\end{align*}
Moreover, 
\begin{align}
\P\Big(\sup_{t\in{\calS}}\sum_{(j,k)\in {\calE}^c(t)}\ind{\hat g_{j,k}(t;u_\sharp)\ne 0}= 0\Big)\to 1,\\
\P\Big(\sup_{t\in{\calS}} \sum_{(j,k)\in {\calE}^*(t;2u_\sharp)}\ind{\hat g_{jk}(t;u_\sharp)=0}= 0\Big)\to 1.
\end{align}

\item[(ii)] {\bf Around change points.} For $s\in{\calN}$, take $b=b_\star:=C_1\big(n^{-1}J_{q,A}(n,p)\big)^{1/2}+C_2 N_X^{1/2}n^{-1/4}\log(p)^{1/4}$, and  $u=u_\star: = C_0 \kappa_p^2 b_\star$, where $C_0$, $C_1$ and $C_2$ are constants independent of $n$ and $p$. Suppose $u_\star=o(1)$. We have
\begin{align*}%\label{eqn:cov_boundary_unif}
&\sup_{t\in{\calN}}\max_{j,k}|\hat\sigma_{j,k}(t)-\sigma_{j,k}(t)|= O_\P( b_\star).
\end{align*}
Choose the penalty parameter as $\lambda_\star:=C_1\kappa_pb_\star$, where $C_1$ is a constant independent of $n$ and $p$. Then
\begin{align}
\label{pre_boundary_unif}
\sup_{t\in{\calN}}|\hat{\Omega}_{\lambda_\star}(t)-\Omega(t)|_\infty= O_\P( \kappa_p^2  b_\star).
\end{align}
Moreover,
\begin{align}
\P\Big(\sup_{t\in{\calN}}\sum_{(j,k)\in {\calE}^c(t)}\ind{\hat g_{j,k}(t;u_\star)\ne 0}= 0\Big)\to 1,\\
\P\Big(\sup_{t\in{\calN}} \sum_{(j,k)\in {\calE}^*(t;2u_\star)}\ind{\hat g_{j,k}(t;u_\star)=0}= 0\Big)\to 1.
\end{align}
\end{enumerate}
\end{thm}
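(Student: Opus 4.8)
The plan is to condition throughout on the high-probability event $\calA_n=\{\hat\iota=\iota\}\cap\{\max_j|\hat s_{(j)}-t^{(j)}|\le Ch_\diamond^2\}$ furnished by Theorem~\ref{thm:jumppointsestimation}, whose complement has probability tending to zero, and to analyze the kernel-smoothed CLIME estimator separately on $\calS$ and on $\calN$. For part (i) I would first note that, on $\calA_n$ and because $h_\diamond^2=o(b_\sharp)$, the exclusion radius $h_\diamond^2+b_\sharp$ in the definition of $\calS$ (with a sufficiently large constant) forces, for every $t\in\calS$, the smoothing window $[t-b_\sharp,t+b_\sharp]$ to sit inside a single Lipschitz piece $[t^{(l)},t^{(l+1)})$, so that no true break enters the window. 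The local data then satisfy the smooth single-segment hypotheses of Proposition~\ref{prop:precmx_smooth}(ii), and its uniform bias term ($O(b_\sharp^2)$ from Assumption~\ref{assumption:smoothness}(ii)) together with its stochastic term $O_\P(B_n^{-1}(J_{q,A}(n,p)+N_X(n\log p)^{1/2}))$, which equals $b_\sharp^2$ at $b=b_\sharp$, transfer verbatim to give \eqref{eqn:cov_boundary_unif}. The precision-matrix bound follows from the dual-feasibility (CLIME sensitivity) argument behind Proposition~\ref{prop:precmx_smooth}: with $\sup_{\calS}|\hat\Sigma(t)-\Sigma(t)|_\infty=O_\P(b_\sharp^2)$ and $|\Omega(t)|_{L^1}\le\kappa_p$, the choice $\lambda_\sharp=C_1\kappa_pb_\sharp^2$ keeps $\Omega(t)$ feasible and yields $\sup_{t\in\calS}|\hat\Omega_{\lambda_\sharp}(t)-\Omega(t)|_\infty=O_\P(\kappa_p^2 b_\sharp^2)$.

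For the support statements in both parts I would invoke the same thresholding dichotomy, driven solely by the $\ell_\infty$ precision bound. Writing $r_\sharp=\kappa_p^2 b_\sharp^2$ for the rate, on the event $\{\sup_{\calS}|\hat\Omega_{\lambda_\sharp}-\Omega|_\infty\le r_\sharp\}$ (probability $\to1$) any true non-edge $(j,k)\in\calE^c(t)$ has $|\tilde\omega_{jk}(t)|\le r_\sharp<u_\sharp$ once $C_0$ is large enough, so $\hat g_{jk}(t;u_\sharp)=0$ uniformly and no false edge is declared; symmetrically, for $(j,k)\in\calE^*(t;2u_\sharp)$ one has $|\tilde\omega_{jk}(t)|\ge|\omega_{jk}(t)|-r_\sharp>2u_\sharp-u_\sharp=u_\sharp$, so the edge is retained. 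The identical two inequalities, with $r_\star=\kappa_p^2 b_\star$ and $u_\star=C_0\kappa_p^2 b_\star$ in place of $r_\sharp,u_\sharp$, give the support conclusions of part (ii).

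The substance of part (ii) is the covariance bound $\sup_{t\in\calN}\max_{j,k}|\hat\sigma_{jk}(t)-\sigma_{jk}(t)|=O_\P(b_\star)$ for the reflection-corrected estimator; the passage from here to the precision bound \eqref{pre_boundary_unif} and the support statements is then identical to the paragraphs above, with $b_\star$ replacing $b_\sharp^2$. The stochastic term is unchanged, since reflection merely relabels which observation enters each summand and so preserves the functional-dependence bookkeeping of Assumption~\ref{assumption:dep}; it is again $O_\P(B_n^{-1}(\cdots))$, which at $b=b_\star$ equals $b_\star$. The new work is the bias. Writing the effective time as $\tau_i=t_i$ on the side of $\hat s_j$ containing $t$ and $\tau_i=2\hat s_j-t_i$ on the reflected side, the bias is $\sum_i w(t,t_i)[\Sigma(\tau_i)-\Sigma(t)]$. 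The purpose of the reflection is that all $\tau_i$ land on the same segment as $t$ relative to $\hat s_j$, so the $\Theta(1)$ jump no longer enters; a one-sided Taylor expansion of the $\calC^2$ piece then leaves only the first-order term $\Sigma'(t)\sum_i w(t,t_i)(\tau_i-t)$, and a direct kernel calculation gives $\sum_i w(t,t_i)(\tau_i-t)=b_\star\,m(a/b_\star)$ for a bounded profile $m$ (with $a=|t-\hat s_j|$), hence $O(b_\star)$ uniformly. The absence of the usual $O(b_\star^2)$ cancellation is precisely the loss of window symmetry at the boundary.

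The hard part, for which I would reserve the most care, is controlling the discrepancy between reflecting about the estimated break $\hat s_j$ and the true break $t^{(j)}$. On $\calA_n$ these differ by at most $Ch_\diamond^2$, and the inner radius $h_\diamond^2$ in $\calN$ (taken with a large enough constant) keeps $t$ strictly on the correct side of $t^{(j)}$; nevertheless the reflection about $\hat s_j$ can still import observations from the gap between $t^{(j)}$ and $\hat s_j$, whose true mean carries the wrong-segment covariance. I would bound the total kernel mass attached to this gap and to its reflected image, express it in terms of the break-localization rate $h_\diamond^2$ and the jump size $c_2$ from Assumption~\ref{assumption:jump}, and show that, combined with the smoothness margin, its contribution is absorbed into the $O_\P(b_\star)$ term. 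This is the only place where the two bandwidths genuinely interact—$b_\sharp$ through $h_\diamond$ and the geometry of $\calN$, $b_\star$ through the smoother—and tracking this interplay carefully is the central obstacle. Once this mismatch term is dispatched, assembling the bias and stochastic parts and feeding the resulting $O_\P(b_\star)$ covariance bound through the CLIME and thresholding steps of the earlier paragraphs completes part (ii).
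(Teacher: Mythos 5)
Your overall architecture coincides with the paper's: condition on the event furnished by Theorem~\ref{thm:jumppointsestimation}, split $\hat\Sigma(t)-\Sigma(t)$ into a stochastic part (handled by Lemma~\ref{lemma:Y_i,jk_large-dev}, exactly as in Lemma~\ref{lemma:covpredev}) and a bias part (second-order Taylor expansion on $\calS$, Lipschitz-only after reflection on $\calN$), and then feed the resulting sup-norm covariance bound through the CLIME dual-feasibility inequality (\ref{eqn:MT1236p}) and the thresholding dichotomy of Proposition~\ref{prop:partial_support_recovery}. Part (i) and the support-recovery steps are correct and match the paper (your explicit ``sufficiently large constant'' in the exclusion radius is exactly what the paper's inclusion ${\calT}_{b}(j)\subset \hat{\calT}_{b+h_\diamond^2}(j)$ sweeps under the rug). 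A minor caveat: reflection does not merely relabel observations, it uses same-side observations near $\hat s_j$ twice (their own weight plus the reflected weight), and $\hat s_j$ is data-dependent; this is repairable because Lemma~\ref{lem:nagaev} is proved for arbitrary nonnegative weights summing to one, but it deserves a sentence.

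The genuine problem is in part (ii), precisely at the step you call ``the hard part.'' You correctly isolate the issue that the paper's proof silently elides: the reflection is about $\hat s_j$, not $t^{(j)}$, so whenever $t^{(j)}$ lies between $\hat s_j$ and $t$, both the original and the reflected observations whose effective times fall in the gap between $\hat s_j$ and $t^{(j)}$ carry the wrong-segment covariance (the paper instead writes the bias as $\int_{-1}^1 K(u)[\sigma_{jk}(ub+t)-\sigma_{jk}(t)]du$ and invokes Lipschitz continuity, tacitly pretending the reflection is about the true break). But your claim that this contribution ``is absorbed into the $O_\P(b_\star)$ term'' does not survive the arithmetic. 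On the event of Theorem~\ref{thm:jumppointsestimation} the gap has length $O(h_\diamond^2)$; its kernel mass under bandwidth $b_\star$ is of order $h_\diamond^2/b_\star$; and each contaminated summand contributes a bias of order one (the jump size, which Assumption~\ref{assumption:jump} bounds below by $c_2$). Since $h_\diamond\asymp (n^{-1}J_{q,A}(n,p))^{1/3}$ while $b_\star\asymp (n^{-1}J_{q,A}(n,p))^{1/2}$ (and analogously for the $N_X$ terms), and these quantities are $o(1)$, one has $h_\diamond\gg b_\star$ and therefore $h_\diamond^2/b_\star\gg b_\star$: the contamination term strictly dominates the claimed rate. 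What your argument actually yields is $O_\P(b_\star+h_\diamond^2/b_\star)$; to get a clean statement one must either establish change-point localization of order $o(b_\star^2)$, or accept the slower rate obtained by balancing $b+h_\diamond^2/b$, namely $b\asymp h_\diamond$ and rate $O_\P(h_\diamond)$ around the breaks. So the absorption step, as described, fails. It is only fair to note that the paper's own proof never quantifies this contamination at all, so your proposal has the merit of exposing the weak point --- but it does not close it.
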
 

Note that the convergence rates for the covariance matrix entries and precision matrix entries in case (ii) around the jump locations are slower than those for points well separated from the jump locations in case (i). This is because on the boundary due to the reflection, the smooth condition may no longer holds true. Indeed, we only take advantage of the Lipschitz continuous property of the covariance matrix function. Thus we lose one degree of regularity in the covariance matrix function, and the bias term $b^2$ in the convergence rate of the between-jump area becomes $b$ around the jumps. 
We also note that around the smaller neighborhood of the jump ${\calJ}:=\cup_{1\le j\le \hat\iota}\hat{\calT}_{h_\diamond^2}$, due to the larger error in the change point estimation, consistent recovery the graphs is not achievable. 

%As a result, in performing the estimation, the optimal theoretical bandwidth parameter $b$ is smaller than that for the between jumps, and so is the penalty parameter $\lambda$.

\section{A simulation study}\label{sec:simulation_network}
We simulate data from the following multivariate time series model:
\[
X_i=\sum_{m=0}^{100}A_m(i)\bs\epsilon_{i-m}, i=1,\ldots,n,
\]
where $A_{m}(i)\in\mathbb R^{p\times p},1\le m\le 100,1\le i\le n$, and $\bs\epsilon_{i-m}=(\epsilon_{i-m,1},\ldots,\epsilon_{i-m,p})^\top$, with $\epsilon_{m,k}$, $m\in \mathbb Z$, $j=1,\ldots,p$ generated as i.i.d. standardized $T(8)$ random variables. In the simulation, we fix $n=1000$ and vary $p=50$ and $p=100$.  For each $m=1,\ldots, 100$, the coefficient matrices $A_m(i)=(1+m)^{-\beta}B_m(i)$, where  $\beta=1$, and $B_m(1)$ is an $R^{p\times p}$ block diagonal matrix. The $5\times 5$ diagonal blocks in $B_m(i)$ are fixed with i.i.d. $N(0,1)$ entries and all the other entries are $0$. 

We consider the number of abrupt changes is $\iota=2$ and $(nt^{(1)},nt^{(2)})=(300,650)$.  The matrix $A_0(i)$ is set to be a zero matrix for $i=1,2,\ldots,299$, while $A_0(i)=A_0(299)+\boldsymbol\alpha \boldsymbol \alpha^\top$, $i=300,301,\ldots,649$, and $A_0(i)=A_0(649)-\boldsymbol\alpha \boldsymbol \alpha^\top$, $i=650,651,\ldots,1000$, where the first $20$ entries in $\boldsymbol \alpha$ is taken to be a constant $\delta_0$ and the others are $0$.

We let the coefficient matrices $A_1(i)=\{a_{m,jk}(i)\}_{1\le j,k\le p}$ evolve at each time point such that two entries are soft-thresholded and another two elements increase. Specifically, at time $i$, we randomly select two elements from the support of $A_1(i)$, which are denoted as $\{a_{1,j_l^\star k_l^\star}(i)\}, l=1,2$ and that $a_{1,j^\star k^\star}(i)\ne 0$, and set them to $a^\star_{1,j_l^\star k_l^\star}(i)=\mbox{sign}(a_{1,j_l^\star k_l^\star}(i))(|a_{1,j_l^\star k_l^\star}(i)-0.05|)$. We also randomly select two elements from $A^\star_1(i)$ and increase their values by $0.03$.

Figure \ref{cov50} and Figure \ref{cov100} show the support of the true covariance matrices at $i=100,200,\ldots,900$.
\begin{figure}[htbp] 
   \centering
   \includegraphics[width=5in]{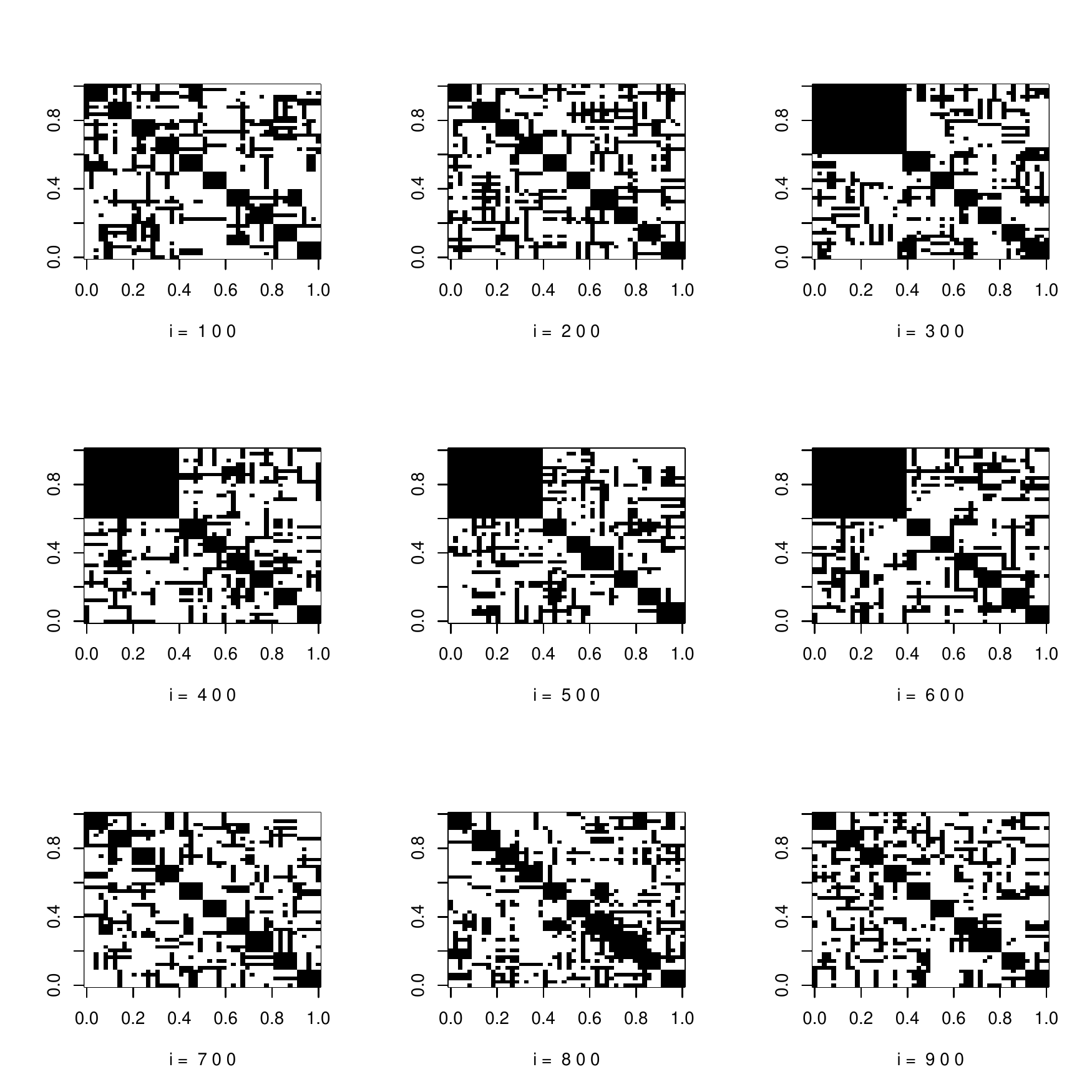} 
   \caption{Support of the true covariance matrices, $p=50$}
      \label{cov50}%  figure placement: here, top, bottom, or page

\end{figure}

\begin{figure}[htbp] %  figure placement: here, top, bottom, or page
   \centering
   \includegraphics[width=5in]{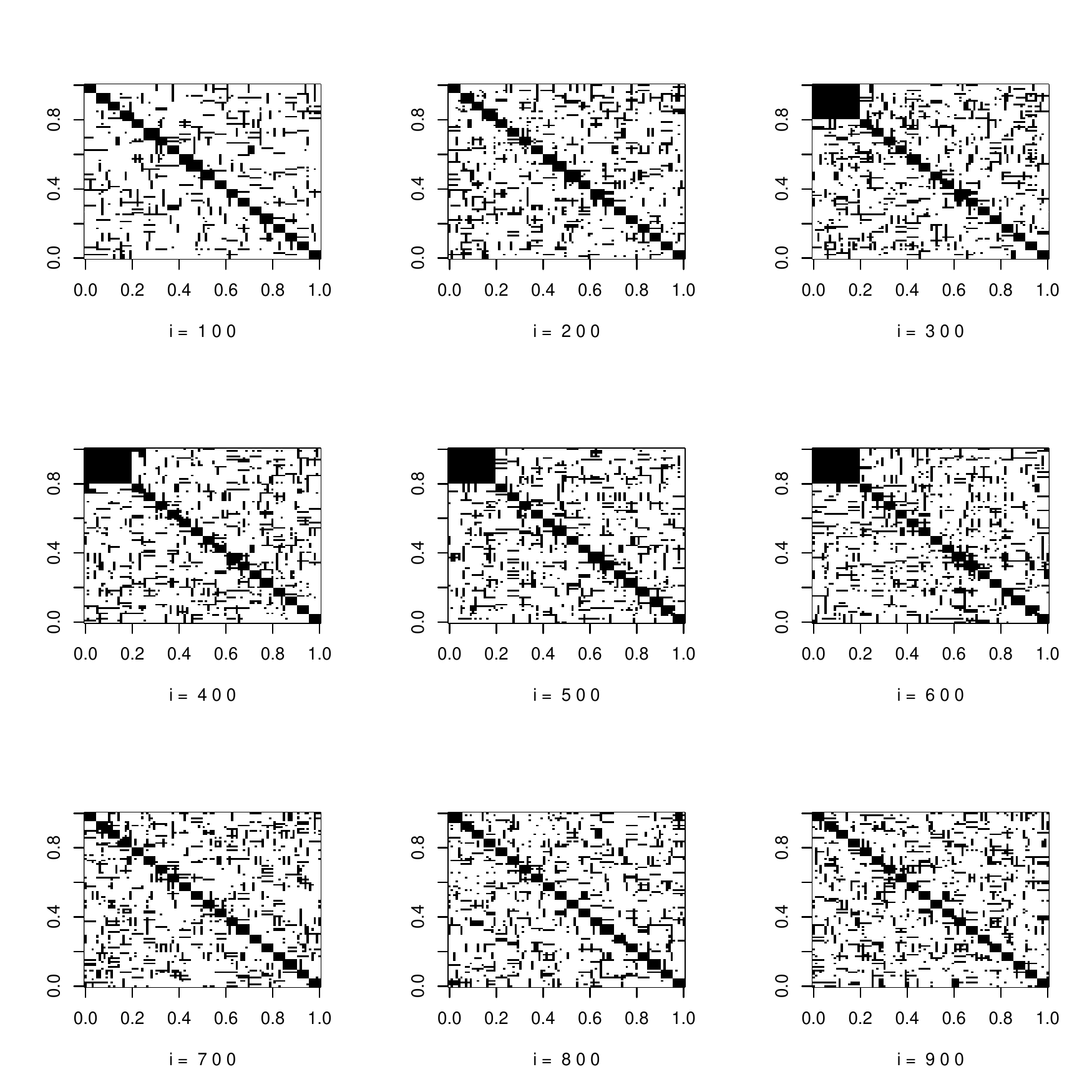} 
 \caption{Support of the true covariance matrices, $p=100$}
    \label{cov100}
\end{figure}

% The simulation is repeated for $500$ times.  

%The jump innovations $\bs\delta_i$ are $\bs 0$ for $0<i/n<t^{(1)}$ and i.i.d. $N(\bs 0,C^\star_l\times\Sigma_\delta)$ for $t^{(l)}\le i/n\le t^{(l+1)}, l=1,\ldots, \iota+1$, independent of $(\bs\epsilon_m)_{m\in \mathbb Z}$, with positive constants $C^\star_l$ that controls the jump magnitudes. We include various $C^\star_\delta$ from $0.5$ to $5$.

In detecting the change points, the cutoff value $\nu$ of detection is chosen as follows. After removing the neighborhood of detected change points, we obtain $\mathcal D_h^{(l)}$ by ordering $\mathcal D_h^{(l)},\ldots \mathcal D_h^{(\mathfrak l)}$, where $\mathfrak l$ is obtained from (\ref{eqn:early_stop}) with $\nu=0$. For $l=1,2,\ldots,\mathfrak l-1$, compute
\[
\mathcal R_h^{(l)}={\mathcal D_h^{(l)}\over \mathcal D_h^{(l+1)}}.
\]
We let $\hat\iota=\arg\max_{0\le l\le \mathfrak l-1 }\mathcal R_h^{(l)}$ and set $\nu=\mathcal D_h^{(\hat\iota)}$.

We report the number of estimated jumps and the average absolute estimation error, where the average absolute estimation error is the mean of the distance between the estimated change points and the true change points. As is shown from Table \ref{table:distance} and Table \ref{table:number}, there is an apparent improvement in the estimation accuracy as the jump magnitude increases and dimension decreases. The detection is relatively robust to the choice of bandwidth.

We evaluate the support recovery performance of the time-varying CLIME at the lattice $100,200,\ldots,900$ with $\lambda=0.02,0.06,0.1$. We take the uniform kernel function and the bandwidth is fixed as $0.2$. At each time point $t_0$, two quantities are computed: sensitivity and specificity, which are defined as:
\begin{align*}
\mbox{sensitivity}&=\frac{\sum_{1\le j,k\le p}\mathbb{I}\{\hat g_{jk}(t_0;u)\ne 0, g_{jk}(t_0;u)\ne 0\}}{\sum_{1\le j,k\le p}\mathbb{I}\{g_{jk}(t_0;u)\ne 0\}},\\
\mbox{specificity}&=\frac{\sum_{1\le j,k\le p}\mathbb{I}\{\hat g_{jk}(t_0;u)= 0, g_{jk}(t_0;u)= 0\}}{\sum_{1\le j,k\le p}\mathbb{I}\{ g_{jk}(t_0;u)= 0\}}.
\end{align*}
We plot the Receiver Operating Characteristic (ROC) curve, that is, sensitivity against 1-specificity. From Figure \ref{fig:roc_p50} and Figure \ref{fig:roc_p100} we observe that, due to a screening step, the support recovery is robust to the choice of $\lambda$, except at the change points, where a non-negligible estimation error of the covariance matrix is induced and the overall estimation is less accurate. As the effective dimension of the network remains the same at $p=50$ and $p=100$ by the construction of the coefficient matrix $A_m(i)$, there is no significant difference in the ROC curves at different dimensions.

\begin{table}[htp]
\caption{Average distance.}
\begin{center}
\begin{tabular}{cccccccc}
\hline\hline
%&&\multicolumn{6}{c}{Bandwidth}\\
%\hline
&bandwidth &0.14&0.16&0.18&0.2&0.22&0.24\\
\hline
\multirow{3}{*}{$p=50$}
&$\delta_0=1$&23.4&21.0&17.47&16.6&14.7&16.5\\
&$\delta_0=2$&7.4&6.9&8.3&8.1&7.2&6.3\\
\hline
\multirow{3}{*}{$p=100$}
&$\delta_0=1$&37.2&30.1&26.4&25.5&21.2&21.3\\
&$\delta_0=2$&7.8&8.2&9.9&6.9&8.9&7.6\\
\hline\hline
\end{tabular}
\end{center}
\label{table:distance}
\end{table}%

\begin{table}[htp]
\caption{Number of estimated change points.}
\begin{center}
\begin{tabular}{cccccccc}
\hline\hline
%&\multicolumn{5}{c}{Bandwidth}\\
%\hline
&Bandwidth &0.14&0.16&0.18&0.2&0.22&0.24\\
\hline
\multirow{3}{*}{$p=50$}
&$\delta_0=1$&2.38&2.16&1.99&2.00&2.00&2.00\\
&$\delta_0=2$&2.46&2.31&2.00&2.00&2.00&2.00\\
\hline
\multirow{3}{*}{$p=100$}
&$\delta_0=1$&2.25&2.09&1.99&1.99&2.00&2.00\\
&$\delta_0=2$&2.38&2.19&2.00&2.00&2.00&2.00\\
\hline\hline
\end{tabular}
\end{center}
\label{table:number}
\end{table}%

\begin{figure}[htbp] %  figure placement: here, top, bottom, or page
   \centering
   \includegraphics[width=5in]{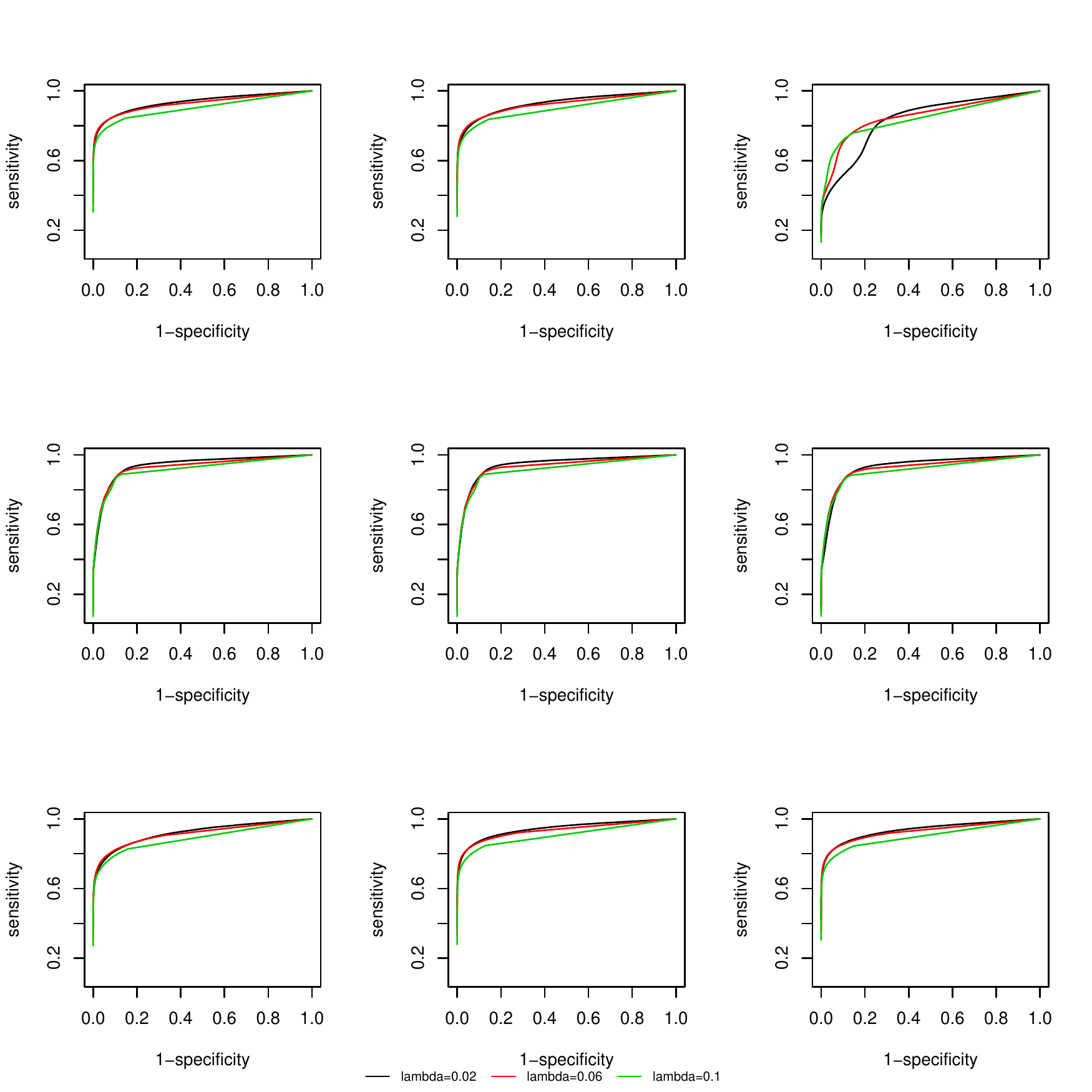} 
   \caption{ROC curve of the time-varying CLIME, $p=50$}
   \label{fig:roc_p50}
\end{figure}

\begin{figure}[htbp] %  figure placement: here, top, bottom, or page
   \centering
   \includegraphics[width=5in]{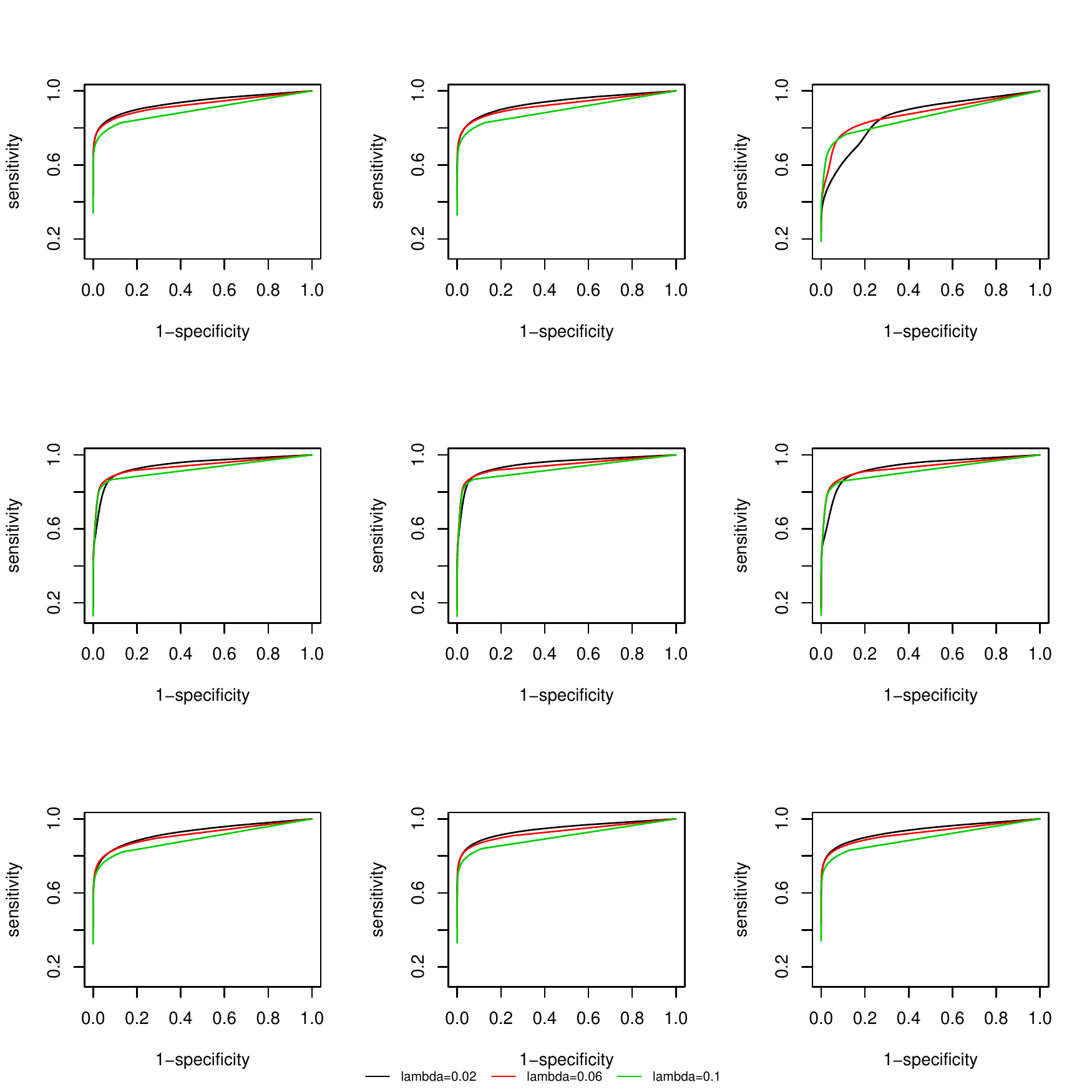} 
   \caption{ROC curve of the time-varying CLIME, $p=100$}
      \label{fig:roc_p100}
\end{figure}

\section{A real data application}
\label{sec:real-data-analysis_network}

Understanding the interconnection among financial entities and how they vary over time provides investors and policy makers with insights in risk control and decision making. \cite{allen2009networks} presents a comprehensive study of the applications of network theory in financial systems. In this section, we apply our method to a real financial dataset from Yahoo! Finance (\texttt{finance.yahoo.com}). The data matrix contains daily closing prices of 420 stocks that are always in the S\&P 500 index between January 2, 2002 through December 30, 2011. In total, there are $n=2519$ time points. We select 100 stocks with the largest volatility and consider their log-returns; that is, for $j=1,\dots,100$,
\begin{equation*}
X_{ij} = \log\rbr{p_{i+1,j} /  p_{ij}},
\end{equation*}
where $p_{ij}$ is the daily closing price of the stock $j$ at time point $i$.  %For each stock, the log-returns $X_{ij}$ are then standardized to zero mean and unit variance. 
We first compute the statistic (\ref{eqn:D_nonstationary}) and (\ref{eqn:change-point-estimator-infty-nonstationary1}) for the change point detection. We look at the top three statistics for different bandwidths. For bandwidth $k=n^{-1/5}=0.21$, we rank the test statistic and find that the location for the top change point is: February 07, 2008 ($n_{\hat{s}_1} = 1536$), which is shown in Figure~\ref{fig:D_max}. The detected change point is quite robust to a variety of choices of bandwidth. Our result is partially consistent with the change point detection method in \cite{MR2572452}. In particular, the two breaks in 2006 and 2007 were also found in \cite{MR2572452} and it is conjectured that the 2007 break may be associated to the U.S. house market collapse. Meanwhile, it is interesting to observe the increased volatility before the 2008 financial crisis.

\begin{figure}[h!]
    \centering
    \includegraphics[width=4in]{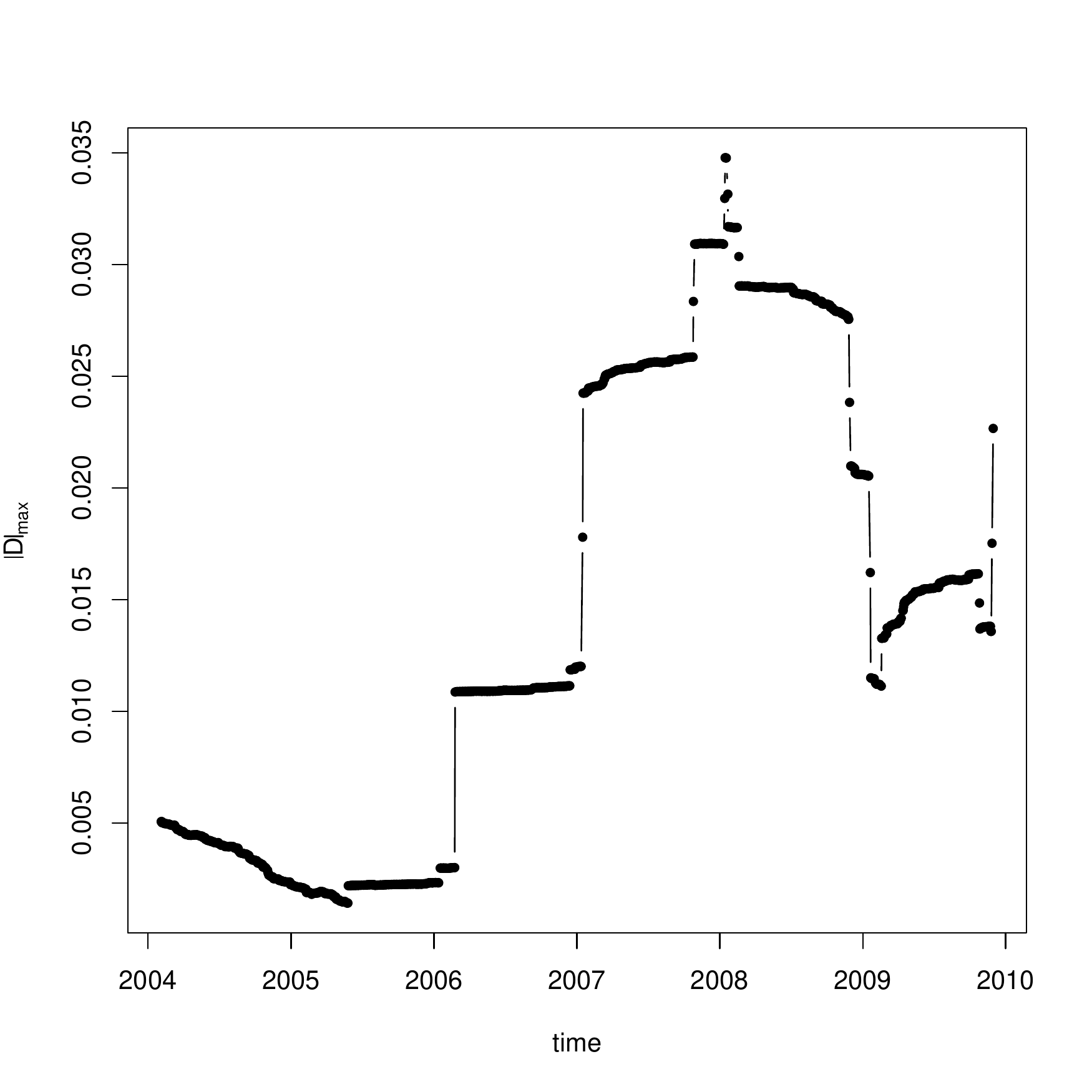}
    \caption{Break size $|D_s|_\infty$. From February 4, 2004, to November 30, 2009. %\textcolor{red}{Not sure if we want to show this plot...}
    }
    \label{fig:D_max}
\end{figure}

Next, we estimate the time-varying networks before and after the change point at May 26, 2006 with the largest jump size. Specifically, we look at four time points at: 813, 828, 888, and 903, corresponding to March 23, 2006 April 13, 2006, July 11, 2006, and August 1, 2006. We use tv-CLIME (\ref{eqn:CLIME}) with the Epanechnikov kernel with the same bandwidth as in the change point detection to estimate the networks at the four points. Optimal tuning parameter $\lambda$ is automatically selected according to the stability approach \cite{liuroederwasserman2010}. The following matrix shows the number of different edges at those four time points. It is observed that time the first two time points (813 and 828) and the last two (888 and 903) have higher similarity than across the change point at time 858. The estimated networks are shown in Figure \ref{fig:time-varying-networks}. Networks in the first and second row are estimated before and after the estimated change point at time 858, respectively. It is observed that at each time point the companies in the same section tend to be clustered together such as companies in the \texttt{Energy} section: OXY, NOV, TSO, MRO and DO (highlighted in cyan). In addition, the distance matrix of estimated networks is estimated as

\begin{equation*}
\left(
\begin{array}{cccc}
0 & 332 & 350 & 396 \\
332 &   0 & 394 & 428 \\
350 & 394  &  0 & 234 \\
396 & 428 & 234 & 0 \\
\end{array}
\right).
\end{equation*}

\begin{figure}[htp] %  figure placement: here, top, bottom, or page
   \centering
  \subfigure[Time 813.] {\label{subfig:time_813}\includegraphics[scale=0.3]{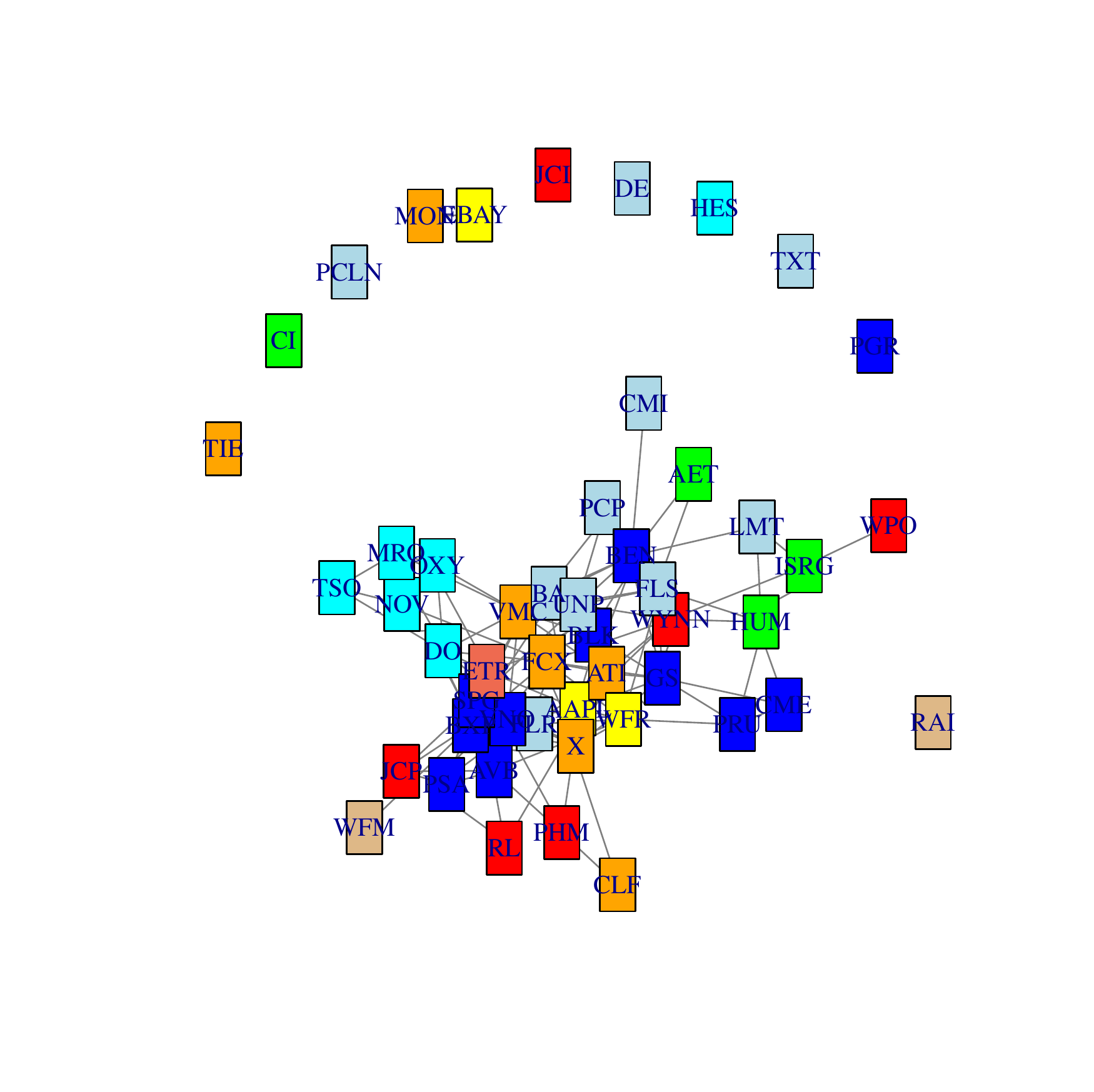}}
	\subfigure[Time 828.] {\label{subfig:time_828}\includegraphics[scale=0.3]{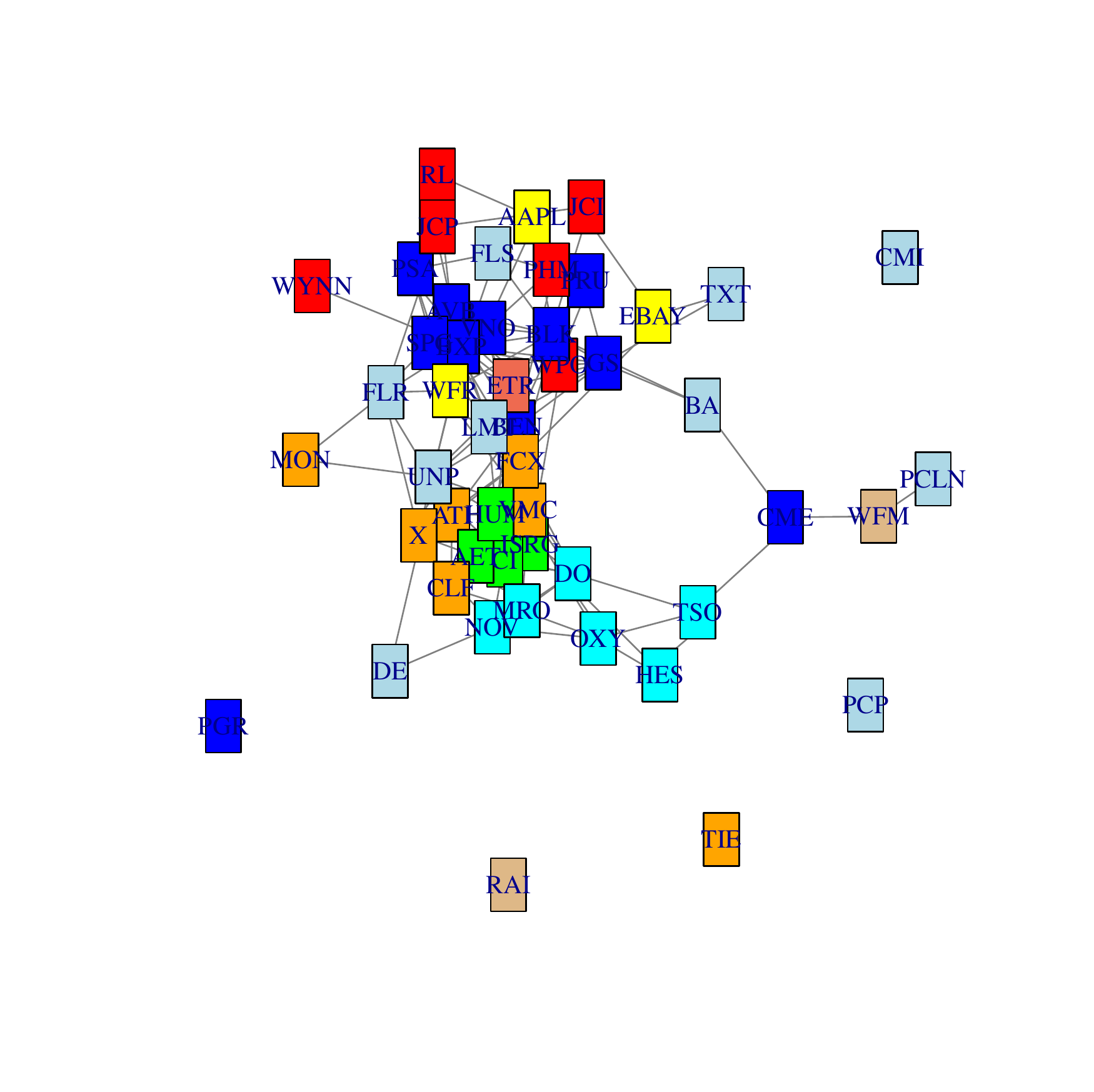}} \\
	\subfigure[Time 888.] {\label{subfig:time_888}\includegraphics[scale=0.3]{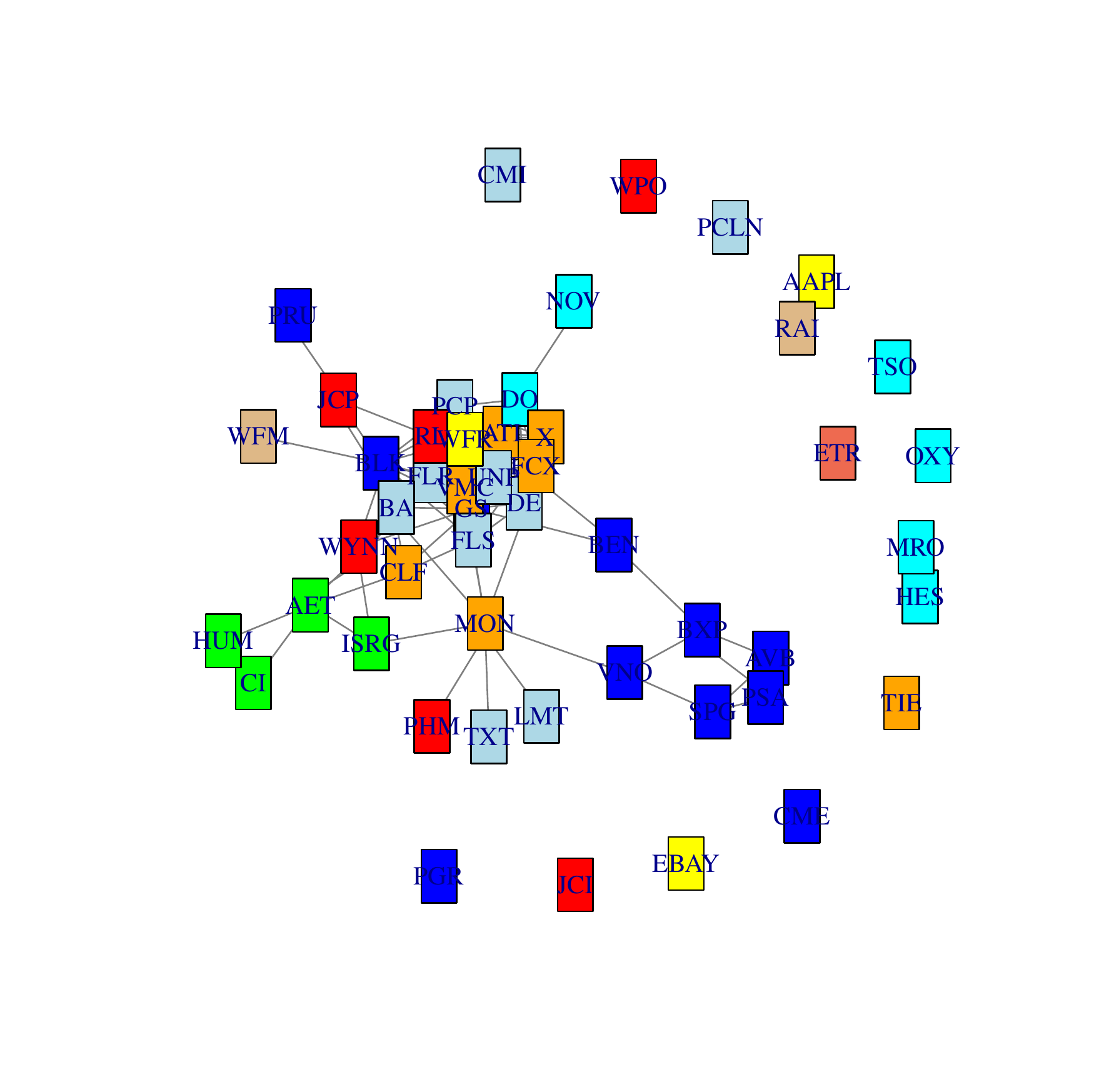}}
	\subfigure[Time 903.] {\label{subfig:time_903}\includegraphics[scale=0.3]{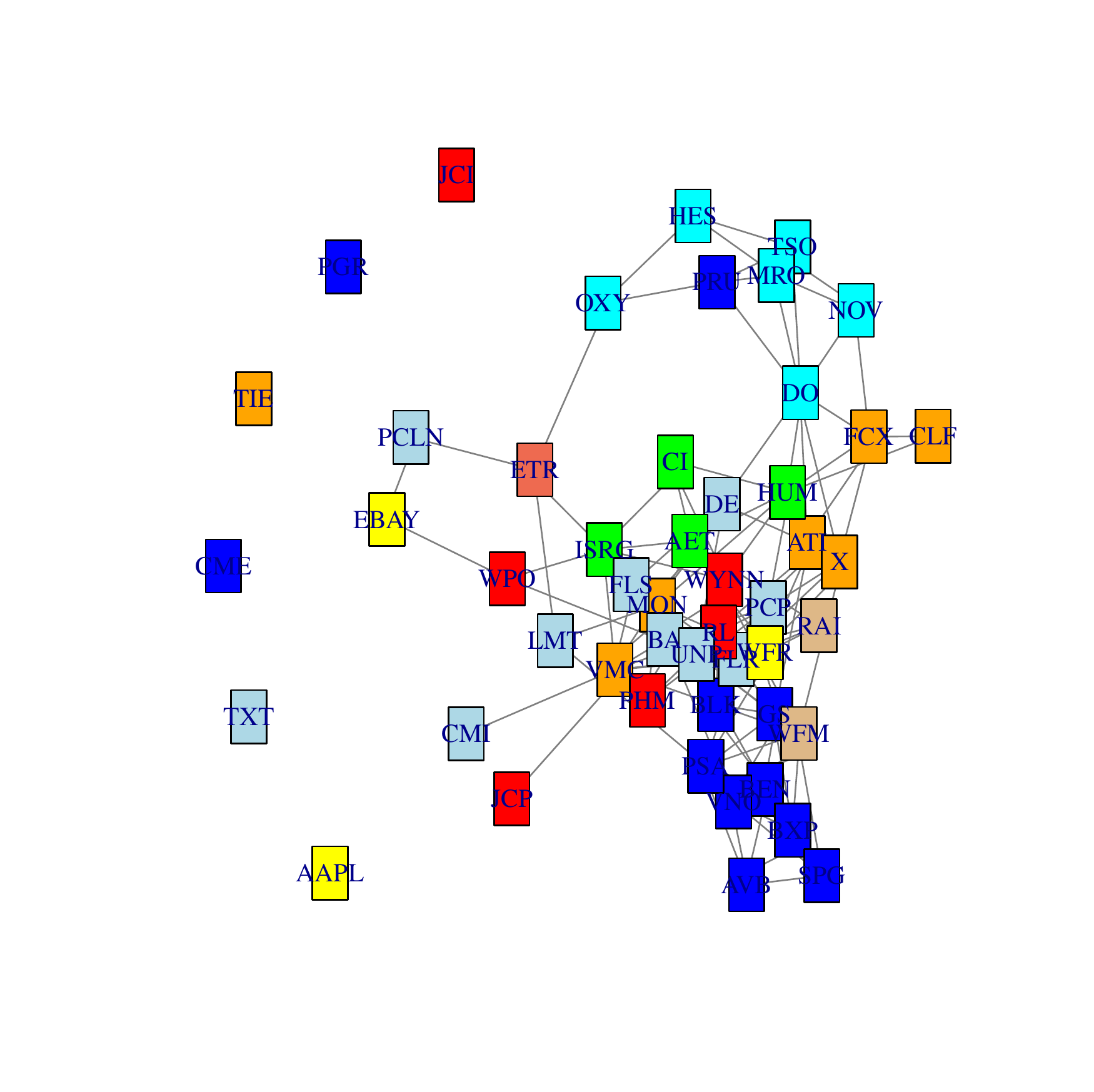}}\\
   \caption{Estimated networks at time points 813, 828, 888 and 903, corresponding to March 23, 2006, April 13, 2006, July 11, 2006, and August 1, 2006. Colors correspond to the nine sections in the S\&P dataset.}
   \label{fig:time-varying-networks}
\end{figure}

\section{Proof of main results}
\label{sec:proof}

\subsection{Preliminary lemmas}
\begin{lem}\label{lem:nagaev}
Let $(Y_i)_{i\in Z}$ be a sequence that admits (\ref{eqn:data_local}). Assume $Y_i\in \mathcal L^q$ for $i=1,2,\dots$, and the dependence adjusted norm (DAN) of the corresponding underlying array $(Y_i^\circ(t))$ satisfies $\|Y_\cdot\|_{q,A}<\infty$ for $q>2$ and $A>0$. Let $(\omega(t,t_i))_{i=1}^n$ be defined in (\ref{eqn:kernel weight}) and suppose that the kernel function $K(\cdot)$ satisfies Assumption \ref{assumption:kernel}. Denote $\varpi_{q,A}(n) = n, n(\log n)^{1+2q}, n^{q/2-Aq}$ if $A > 1/2-1/q$, $A = 1/2-1/q$, and $0 < A < 1/2-1/q$, respectively. Then there exist constants $C_1,C_2$ and $C_3$ independent of $n$, such that for all $x>0$,
\begin{align}
\label{eqn:dev-Y}
\sup_{t\in (0,1)}\P\rbr{\rBR{\sum_{i=1}^nw(t,t_i)\big(Y_i-\E(Y_i)\big)}>x}\le C_1{\varpi_{q,A}(B_n)\RBR{Y_\cdot}_{q,A}^q\over B_n^{q} x^q}+C_2\exp\rbr{-C_3 B_n x^2\over  \RBR{Y_\cdot}_{2,A}^2}.
\end{align}
\begin{align}
\label{eqn:max-dev-Y}
\P\rbr{\sup_{t\in (0,1)}\rBR{\sum_{i=1}^nw(t,t_i)\big(Y_i-\E(Y_i)\big)}>x}\le C_1{\varpi_{q,A}(n)\RBR{Y_\cdot}_{q,A}^q\over B_n^{q} x^q}+C_2\exp\rbr{-C_3 B_n^2 x^2\over  n\RBR{Y_\cdot}_{2,A}^2}.
\end{align}
\end{lem}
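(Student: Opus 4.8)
The plan is to prove both inequalities by reducing the kernel-weighted sum to a scalar Nagaev-type inequality for causal processes governed by the dependence adjusted norm, and then to upgrade the pointwise estimate to the uniform one through a discretization argument. Throughout, fix the target sum $S(t)=\sum_{i=1}^n w(t,t_i)(Y_i-\E Y_i)$ and exploit that, since $K$ has bandwidth $b$ and support in $[-1,1]$, the weights are nonzero for only $O(B_n)$ indices, satisfy $w(t,t_i)=O(1/B_n)$ and $\sum_i w(t,t_i)=1$, and obey $\sum_i w(t,t_i)^2=O(1/B_n)$. Hence $S(t)$ is a normalized partial sum over an effective sample size $B_n$, and all the estimates below are uniform in $t$ because these weight properties and the constants coming from the DAN do not depend on $t$.

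\emph{Pointwise bound.} I would decompose $S(t)$ along martingale differences using the projection operators $\Proj_k(\cdot)=\E[\cdot\mid\calF_k]-\E[\cdot\mid\calF_{k-1}]$: writing $Y_i-\E Y_i=\sum_{k\le i}\Proj_k Y_i$ and interchanging the order of summation gives $S(t)=\sum_k M_k$ with $M_k=\Proj_k\big(\sum_{i\ge k}w(t,t_i)Y_i\big)$, a martingale difference with respect to $\calF_k$ whose $\mathcal L^a$-norm is controlled by the DAN through $\|M_k\|_a\le\sum_{i\ge k}w(t,t_i)\,\theta_{i-k,a}$. Truncating each summand at a level calibrated to $x$ and $B_n$ splits $S(t)$ into a bounded part and a heavy-tailed remainder. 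The bounded part produces the sub-Gaussian term via a Freedman/Bernstein-type martingale inequality, the conditional-variance proxy being $\RBR{Y_\cdot}_{2,A}^2\sum_i w(t,t_i)^2=O(\RBR{Y_\cdot}_{2,A}^2/B_n)$, which yields $\exp(-C_3 B_n x^2/\RBR{Y_\cdot}_{2,A}^2)$. The heavy-tailed remainder produces the polynomial term: bounding the $q$-th moment of the truncated weighted sum by Burkholder's and Rosenthal's inequalities under the DAN gives $\varpi_{q,A}(B_n)\RBR{Y_\cdot}_{q,A}^q/(B_n^q x^q)$. The trichotomy in $\varpi_{q,A}$ is exactly the way the high-moment/large-deviation contribution of a length-$B_n$ partial sum scales with the strength of the dependence: it is linear in $B_n$ when $A>1/2-1/q$, carries a polylogarithmic correction at the critical exponent $A=1/2-1/q$, and inflates to $B_n^{q/2-Aq}$ in the strongly dependent regime $A<1/2-1/q$. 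Taking the supremum over $t$ of the resulting bound, which holds with $t$-independent constants, yields (\ref{eqn:dev-Y}).

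\emph{Uniform bound.} To control $\sup_t|S(t)|$ over the continuum I would discretize $(0,1)$ on a grid calibrated to the bandwidth and bound the in-between oscillation using the Lipschitz continuity of $K$ from Assumption \ref{assumption:kernel}: displacing $t$ changes each weight in a controlled way, so the oscillation is itself a weighted sum to which the same Nagaev machinery applies after rescaling. Combining a maximal/chaining inequality over the grid with the pointwise estimate upgrades the effective sample size in the polynomial numerator from $\varpi_{q,A}(B_n)$ to $\varpi_{q,A}(n)$ and degrades the sub-Gaussian exponent from $B_n$ to $B_n^2/n$; the latter factor $b=B_n/n$ is the slack that absorbs the (logarithmic) cost of the supremum over the grid, while the former reflects that the large-deviation contribution is now collected over the full sample rather than a single window. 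Balancing the grid resolution against the pointwise rates produces precisely (\ref{eqn:max-dev-Y}).

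The main obstacle is the sharp moment tracking that underlies the trichotomy $\varpi_{q,A}$ --- especially the boundary regime $A=1/2-1/q$ with its polylogarithmic factor --- which requires the full Nagaev/Rosenthal apparatus for the dependence adjusted norm rather than a soft second-moment argument. A secondary, more bookkeeping-intensive difficulty is making the discretization in the uniform step lossless enough to retain a usable exponent: the oscillation of $S(t)$ in $t$ does not shrink under naive endpoint interpolation, since shifting $t$ by a full bandwidth reshuffles the entire smoothing window, so the correct balance between grid cardinality, oscillation control, and the pointwise rates --- under only finite polynomial moments and genuine nonstationarity --- is where the delicacy lies.
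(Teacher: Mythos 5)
Your pointwise bound \eqref{eqn:dev-Y} is sketched along standard and workable lines (martingale projections, truncation, Freedman plus Rosenthal/Burkholder; the paper's own route is a weighted maximal Nagaev inequality built from the projection decomposition, dyadic blocking into $m$-dependent pieces, and Ottaviani/Nagaev/Doob/Burkholder). The genuine gap is in your uniform step. Observe that in the regime $A>1/2-1/q$ the claimed uniform polynomial term $\varpi_{q,A}(n)\RBR{Y_\cdot}_{q,A}^q/(B_n^qx^q)$ is \emph{exactly} $1/b$ times the pointwise polynomial term $\varpi_{q,A}(B_n)\RBR{Y_\cdot}_{q,A}^q/(B_n^qx^q)$: your union-bound budget is precisely $1/b$ grid points and nothing more. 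But a grid of spacing $b$ leaves cells over which, as you yourself admit, the entire smoothing window is reshuffled, so the within-cell supremum is not a small oscillation error --- it is another full-size supremum of the same kind. Any refinement of the grid to control it breaks the budget: to make the sub-scale oscillation deterministically $\lesssim x$ via the Lipschitz kernel you need subgrid spacing $\delta\lesssim bx/\E|Y_i|$ (the oscillation is bounded by $(\delta/(bB_n))\sum_{i\in W}|Y_i-\E Y_i|$ over a window $W$ of $O(B_n)$ points, whose mean is of order $B_n\E|Y_i|$), and the resulting union bound multiplies the polynomial term by $\E|Y_i|/x$. Since the inequality must hold for all $x>0$, and for $x$ near the nontriviality threshold $\RBR{Y_\cdot}_{q,A}\varpi_{q,A}(n)^{1/q}/B_n$ the factor $\E|Y_i|/x$ is polynomially large in $n$ (e.g.\ whenever $B_n/\sqrt{n}\to\infty$, the setting in which the lemma is used), ``balancing the grid resolution'' proves a strictly weaker inequality than \eqref{eqn:max-dev-Y}; the loss is real, not bookkeeping.

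What is missing is a maximal inequality, which costs only constant factors where union bounds cost cardinality. The paper avoids discretization entirely: by Abel summation and the bounded total variation of the kernel weights, $\sup_t\sum_{i=1}^{n}|w(t,t_i)-w(t,t_{i+1})|\asymp B_n^{-1}$ under Assumption \ref{assumption:kernel}, it bounds $\sup_{t}|\sum_i w(t,t_i)(Y_i-\E Y_i)|$ \emph{deterministically} by $CB_n^{-1}\max_{1\le i\le n}|S_i|$, where $S_i$ are ordinary partial sums; the continuum supremum over $t$ is gone at that point. It then proves the maximal Nagaev-type inequality \eqref{eqn:SM121944} (via its weighted form \eqref{eqn:WJ202046}), using Ottaviani's inequality for the independent and $m$-dependent block components and Doob's submartingale inequality for the martingale tail --- these are exactly the devices that upgrade a deviation bound for a single sum to one for a running maximum at constant cost. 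If you prefer a window-by-window argument, the union over the $1/b$ windows is indeed lossless, but inside each window you still need this maximal inequality over $O(B_n)$ partial sums; your Freedman step is maximal in character, but the heavy-tailed Rosenthal/Nagaev part is not, and that is precisely where Ottaviani (or the summation-by-parts reduction) is indispensable.
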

\begin{proof}
Let $S_i=\sum_{j=1}^i \big(Y_i-\E (Y_i)\big)$. Note that 
\begin{align}\nonumber
\sup_{t\in (0,1)}\rBR{\sum_{i=1}^nw(t,t_i)Y_i}&=\sup_{t\in (0,1)}\rBR{\sum_{i=1}^n w(t,t_i)(S_{i}-S_{i-1})}\\\nonumber
&\le \sup_t\rBR{\sum_{i=1}^{n-1}\rbR{\big(w(t,t_i)-w(t,t_{i+1})\big)S_i}}+\sup_t\rBR{w(t,1)S_n}\\\nonumber
&\lesssim B_n^{-1}\max_{1\le i\le n}|S_i|,
\end{align}
where the last inequality follows from the fact that $\sup_t\sum_{i=1}^n|w(t,t_i)-w(t-t_{i+1})|\asymp B_n^{-1}$ due to Assumption \ref{assumption:kernel}. 

To see (\ref{eqn:max-dev-Y}), it suffices to show
\begin{align}\label{eqn:SM121944}
\P\rbr{\max_{1\le i\le n}|S_i|>x}\le C_1{\varpi_{q,A}(n)\RBR{Y_\cdot}_{q,A}^q\over  x^q}+C_2\exp\rbr{-C_3 x^2\over  n\RBR{Y_\cdot}_{2,A}^2}.
\end{align}

Now we develop a probability deviation inequality for $\max_{1\le i\le n}|\sum_{j=1}^i\alpha_jY_j|$, where  $\alpha_j\ge 0$, $1\le j\le n$ are constants such that $\sum_{1\le j\le n}\alpha_j=1$. 
% For (\ref{eqn:dev-Y}), we take $\alpha_j=w(t,t_j)$ and for (\ref{eqn:max-dev-Y}) we take $\alpha_j=B_n^{-1}$. 
Denote $\mathcal P_0(Y_i)=\E(Y_i|\varepsilon_i)-\E(Y_i)$ and
\[
\mathcal P_k(Y_i)=\E(Y_i|\varepsilon_{i-k},\ldots,\varepsilon_i)-\E(Y_i|\varepsilon_{i-k+1},\ldots,\varepsilon_i). 
\]
Then we can write 
\begin{align}\label{eqn:SM121947}
\max_{1\le i\le n}|\sum_{j=1}^i\alpha_jY_j|
&\le \max_{1\le i\le n}|\sum_{j=1}^i\alpha_j \Proj_0(Y_j)|
+\max_{1\le i\le n}|\sum_{k=1}^n\sum_{j=1}^i\alpha_j \Proj_k(Y_j)|\\\nonumber
&\qquad+\max_{1\le i\le n}|\sum_{k=n+1}^\infty \sum_{j=1}^i\alpha_j \Proj_k(Y_j)|.
\end{align}
Note that $(\Proj_0(Y_j))_{j\in\mathbb Z}$ is an independent sequence. By Nagaev's inequality and Ottaviani's inequality, we have that 
\begin{align}\label{eqn:SM121948}
\Prob(\max_{1\le i\le n}|\sum_{j=1}^i\alpha_j \Proj_0(Y_j)|\ge x)
&\lesssim {\sum_{j=1}^n\alpha_j^q \RBR{\Proj_0(Y_j)}_q^q\over x^q}+\exp\big(-{C_3 x^2\over \sum_{j=1}^n\alpha_j^2 \|\Proj_0(Y_j)\|_2^2}\big)\\\nonumber
&\lesssim\frac{\sum_{j=1}^n\alpha_j^q}{x^q\|Y_j\|_q}+\exp\big(-C_3 {x^2\over \sum_{j=1}^n\alpha_j^2}\big),
\end{align}
where the last inequality holds because $\|\Proj_0(Y_j)\|_q\le 2\|Y_j\|_q$  by Jensen's inequality. 
Since $\sum_{j=i+1}^\infty\alpha_j \Proj_k(Y_j)$ is a martingale difference sequence with respect to $\sigma(\varepsilon_{i+1-k},\varepsilon_{i+2-k},\ldots)$, we have that $|\sum_{k=1+n}^\infty\sum_{j=i+1}^n\alpha_j\Proj_k(Y_j)|$ is a non-negative sub-martingale. Then by Doob's inequality and Burkholder's inequality, we have
\begin{align}\nonumber
&\Prob\big({\max_{1\le i\le n}|\sum_{k=n+1}^\infty\sum_{j=1}^i\alpha_j\Proj_k(Y_j)|\ge x}\big)\\\nonumber
&\le \Prob\big({|\sum_{k=n+1}^\infty\sum_{j=1}^n\alpha_j\Proj_k(Y_j)|\ge {x\over2}}\big)+\Prob\big({\max_{1\le i\le n}|\sum_{k=n+1}^\infty\sum_{j=1+i}^{n}\alpha_j\Proj_k(Y_j)|\ge {x\over2}}\big)\\\nonumber
&\lesssim\frac{\RBR{\sum_{k=1+n}^\infty\sum_{j=1}^n\alpha_j\Proj_k(Y_j)}_q^q}{x^q}\\\label{eqn:SM121951}
&\lesssim\frac{(\sum_{j=1}^n \alpha_j^2)^{q/2}\Theta_{n,q}^q}{x^q}
\le \frac{\Theta_{n,q}^qn^{q/2-1}\sum_{j=1}^n \alpha_j^q}{x^q}.
\end{align}
Now we deal with the term $\max_{1\le i\le n}|\sum_{k=1}^n\sum_{j=1}^i \alpha_j\Proj_k(Y_j)|$. Define $a_m=\min(2^m,n)$ and $M_n=\lceil\log n/\log 2\rceil$. Then
\begin{align}\label{eqn:TM101953}
\max_{1\le i\le n}\big|\sum_{k=1}^n\sum_{j=1}^i \alpha_j\Proj_k(Y_j)\big|
\le \sum_{m=1}^{M_n}\max_{1\le i\le n}\big|\sum_{l=1}^{\lceil i/a_m\rceil}\sum_{j=1+(l-1)a_m}^{\min(la_m,i)}\sum_{k=1+a_{m-1}}^{a_m}\alpha_j\Proj_k(Y_j)\big|.
\end{align}
Let $\mathcal A_{odd}=\{1\le l\le \lceil i/a_m\rceil,l \mbox{ is odd}\}$ and $\mathcal A_{even}=\{1\le l\le \lceil i/a_m\rceil,l \mbox{ is even}\}$. We have
\begin{align*}
\Prob\big(\max_{1\le i\le n}\big|\sum_{l=1}^{\lceil i/a_m\rceil}Z_{l,m,i}\big|\ge x\big)
\le \Prob\big(\max_{1\le i\le n}\big|\sum_{\mathcal A_{odd}}Z_{l,m,i}\big|\ge x/2\big)
+\Prob\big(\max_{1\le i\le n}\big|\sum_{\mathcal A_{even}}Z_{l,m,i}\big|\ge x/2\big),
\end{align*}
where we have that $Z_{l,m,i}:=\sum_{j=1+(l-1)a_m}^{\min(la_m,i)}\alpha_j\Proj_{a_{m-1}}^{a_m}(Y_j)$ is independent of $Z_{l+2,m,i}$ for $1\le l\le \lceil i/a_m\rceil, 1\le m\le M_n, 1\le i\le n$, as $\Proj_{a_{m-1}}^{a_m}(Y_j):=\sum_{k=1+a_{m-1}}^{a_m}\Proj_k(Y_j)$ is $a_m$-dependent. Therefore, we can apply   Ottaviani's inequality and Nagaev's inequality for independent variables. As a consequence, 
\[
\Prob\big(\max_{1\le i\le n}\big|\sum_{l=1}^{\lceil i/a_m\rceil}Z_{l,m,i}\big|\ge x\big)
\lesssim \frac{\sum_{1\le l\le \lceil n/a_m\rceil}\|Z_{l,m,n}\|_q^q}{x^q}
+\exp\big(-{C_3 x^2\over \sum_{1\le l\le \lceil n/a_m\rceil}\|Z_{l,m,n}\|_2^2}\big).
\]
Again, by Burkholder's inequality, we have that for $q\ge 2$,
\begin{align*}
\|Z_{l,m,n}\|_q
&\le \sum_{k=1+a_{m-1}}^{a_m}\|\sum_{j=1+(l-1)a_m}^{\min(la_m,n)}\alpha_j\Proj_k(Y_j)\|_q\\
&\lesssim (\sum_{j=1+(l-1)a_m}^{\min(la_m,n)}\alpha_j^2)^{1/2}(\Theta_{a_{m-1}}-\Theta_{a_m}).
\end{align*}
Note $\sum_{j=1+(l-1)a_m}^{\min(la_m,n)}\alpha_j^2\le a_m^{(q-2)/q}(\sum_{j=1+(l-1)a_m}^{\min(la_m,n)}\alpha_j^q)^{2/q}$.  Let $\tau_m=m^{-2} /\sum_{m=1}^{M_n} m^{-2}$, and we have $\tau_m\asymp m^{-2}$ as $1\le \sum_{m=1}^{M_n} m^{-2}\le \pi^2/6$. In respect to (\ref{eqn:TM101953}), we have that
\begin{align}\label{eqn:SM122004}
\Prob\big(\max_{1\le i\le n}\big|\sum_{k=1}^n\sum_{j=1}^i\Proj_k(Y_j)\big|\ge x\big)
&\le \sum_{m=1}^{M_n} \Prob\big(\max_{1\le i\le n}\big|\sum_{l=1}^{\lceil i/a_m\rceil}Z_{l,m,i}\big|\ge \tau_m x\big)\\\nonumber
&\hskip-2cm
\lesssim \frac{\sum_{i=1}^n \alpha_j^q}{x^q}\|Y_\cdot\|_{q,A}^q\sum_{m=1}^{M_n} \tau_m^{-q}a_m^{(1/2-A)q-1}
+ \sum_{m=1}^{M_n}\exp\big(-\frac{C_3x^2\tau_m^2 a_m^{2A}}{\sum_{j=1}^n\alpha_j^2\|Y_\cdot\|_{2,A}^2}\big).
\end{align}
Note $\sum_{m=1}^{M_n} \tau_m^{-q}a_m^{(1/2-A)q-1}\asymp n^{-1}\varpi_{q,A}(n)$, and
\[
\sum_{m=1}^{M_n}\exp\big(-\frac{C_3x^2\tau_m^2 a_m^{2A}}{\sum_{j=1}^n\alpha_j^2\|Y_\cdot\|_{2,A}^2}\big) \lesssim \exp\big(-\frac{C_3x^2}{\sum_{j=1}^n\alpha_j^2 \|Y_\cdot\|_{2,A}^2}\big).
\]
Combining (\ref{eqn:SM121947}), (\ref{eqn:SM121948}), (\ref{eqn:SM121951}) and (\ref{eqn:SM122004}), we obtain 
\begin{align}\nonumber
&\P\big(\max_{1\le i\le n}\big|{\sum_{j=1}^i\alpha_j\big(Y_j-\E (Y_j)\big)}\big|>x\big)\\
\label{eqn:WJ202046}
&\qquad\le C_1{\varpi_{q,A}(n)\sum_{j=1}^n\alpha_j^q\RBR{Y_\cdot}_{q,A}^q\over nx^q}+C_2\exp\big({-C_3 x^2\over \sum_{j=1}^n\alpha_j^2 \RBR{Y}_{2,A}^2}\big). 
\end{align}
Now we have (\ref{eqn:SM121944}) by taking $\alpha_j=n^{-1}$ for $j=1,\ldots, n$. 
Note that since $K(\cdot)$ has bounded support, for any given $t\in [b,1-b]$, we have
\begin{align*}
\P\big(\big|\sum_{i=1}^n w(t,t_i)(Y_i-\E Y_i)\big|>x\big)\le \P\big(\big|\sum_{i=-B_n}^{B_n}w(t,t_{tn+i})(Y_{tn+i}-\E Y_{tn+i})\big|>x\big)\\
\le C_1{\varpi_{q,A}(B_n)\sum_{i=-B_n}^{B_n}w(t,t_{tn+i})^q\RBR{Y_\cdot}_{q,A}^q\over B_nx^q}+C_2\exp\big({-C_3 x^2\over\sum_{i=-B_n}^{B_n}w(t,t_{tn+i})^2 \RBR{Y_\cdot}_{2,A}^2}\big). 
\end{align*}
 Therefore (\ref{eqn:dev-Y}) follows from (\ref{eqn:WJ202046}) by taking $\alpha_j=w(t,tn+j)$ and note that 
 Note that for any $t\in [b,1-b]$, $\sum_{i=-B_n}^{B_n}w(t,t_{tn+i})^\beta\asymp B_n^{1-\beta}$ for a constant $\beta\ge 2$.
\end{proof}

\begin{lem}
\label{lemma:Y_i,jk_large-dev}
Suppose $(X_{ij})_{i\in \mathbb Z, 1\le j\le p}$ satisfy Assumption \ref{assumption:dep}. Also let Assumption \ref{assumption:kernel} hold. Let $\varpi_{q,A}(n)$ be defined as in Lemma \ref{lem:nagaev}. %Recall from (\ref{eqn:moment_dan_vecproc}) that $\nu_{2q} = \max_{1\le i\le n}\max_{1\le j\le n} \| X_{ij}\|_{2q}$, $M_{X,q}=(\sum_{j=1}^p \|X_{\cdot,j}\|_{2q,A}^q)^{1/q}$ and $N_X =   \max_{1\le j \le p} \|X_{\cdot,j}\|_{4,A}$. 
Then there exist constants $C_1,C_2$ and $C_3$ independent of $n$ and $p$, such that for all $x > 0$, we have
\begin{eqnarray}
\nonumber
&&\sup_{t\in (0,1)}\Prob\big( \big|\sum_{i=1}^n \omega(t,t_i) \big(\vX_i\vX_i^\top-\E( \vX_i\vX_i^\top) \big)\big|_\infty \ge x \big)\\\label{eqn:Y_i,jk_large-dev}
&&\qquad\qquad \le C_1 \nu_{2q}^q {p \varpi_{q,A}(B_n) M_{X,q}^{q} \over B_n^{q}  x^q} + C_2 p^2 \exp\left( -C_3 {B_nx^2 \over \nu_4^2 N_X^2} \right),
\end{eqnarray}
and
\begin{eqnarray}
\nonumber
&&\Prob\big( \sup_{t\in (0,1)}\big|\sum_{i=1}^n w(t,t_i) \big(\vX_i\vX_i^\top-\E( \vX_i\vX_i^\top) \big)\big|_\infty \ge x \big)\\ \label{eqn:Y_i,jk_large-dev_unif}
&&\qquad\qquad\le C_1 \nu_{2q}^q {p \varpi_{q,A}(n) M_{X,q}^{q} \over B_n^q x^q} + C_2 p^2\exp\left( -C_3 {B_n^2x^2 \over n\nu_4^2 N_X^2} \right).
\end{eqnarray}
\end{lem}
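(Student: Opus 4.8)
The idea is to pass from the matrix max-norm to a scalar union bound and then apply Lemma~\ref{lem:nagaev} entrywise. Write $S_{jk}(t) = \sum_{i=1}^n w(t,t_i)\bigl(X_{ij}X_{ik} - \E(X_{ij}X_{ik})\bigr)$ for the $(j,k)$ entry of the centered kernel average, so that the quantity controlled is $\max_{1\le j,k\le p}|S_{jk}(t)|$. Since the maximum runs over a finite index set and therefore commutes with $\sup_t$, the reductions
\[
\sup_{t} \P\Bigl(\max_{j,k}|S_{jk}(t)| \ge x\Bigr) \le \sum_{j,k=1}^p \sup_{t} \P\bigl(|S_{jk}(t)| \ge x\bigr)
\]
and
\[
\P\Bigl(\sup_{t}\max_{j,k}|S_{jk}(t)| \ge x\Bigr) \le \sum_{j,k=1}^p \P\bigl(\sup_{t}|S_{jk}(t)| \ge x\bigr)
\]
both hold. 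I would then apply the pointwise half of Lemma~\ref{lem:nagaev} to the first display and the uniform half to the second, taking the scalar sequence $Y_i^{(jk)} := X_{ij}X_{ik}$; this reduces everything to estimating the two dependence adjusted norms $\RBR{Y^{(jk)}_\cdot}_{q,A}$ and $\RBR{Y^{(jk)}_\cdot}_{2,A}$ and summing the entrywise bounds.

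The crux is the DAN estimate for the product sequence. I would start from the product decomposition of the coupled difference,
\[
X^\circ_{ij}(t)X^\circ_{ik}(t) - X^\circ_{ij,\{m\}}(t)X^\circ_{ik,\{m\}}(t) = \bigl(X^\circ_{ij}(t)-X^\circ_{ij,\{m\}}(t)\bigr)X^\circ_{ik}(t) + X^\circ_{ij,\{m\}}(t)\bigl(X^\circ_{ik}(t)-X^\circ_{ik,\{m\}}(t)\bigr),
\]
and apply the Cauchy--Schwarz inequality so that in each summand an $\mathcal L^{2q}$ coupled difference multiplies an $\mathcal L^{2q}$ coordinate. The moment hypothesis $\nu_{2q}<\infty$ in Assumption~\ref{assumption:dep} bounds each coordinate factor by $\nu_{2q}^{1/(2q)}$, which gives the lag-$m$ physical dependence measure of $Y^{(jk)}$ at order $q$ no larger than $\nu_{2q}^{1/(2q)}(\theta_{m,2q,j}+\theta_{m,2q,k})$. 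Summing over lags and taking the weighted supremum yields $\RBR{Y^{(jk)}_\cdot}_{q,A} \le \nu_{2q}^{1/(2q)}\bigl(\RBR{X_{\cdot,j}}_{2q,A}+\RBR{X_{\cdot,k}}_{2q,A}\bigr)$, and the identical argument at order $2$ (using $\nu_4$) gives $\RBR{Y^{(jk)}_\cdot}_{2,A} \le \nu_4^{1/4}\bigl(\RBR{X_{\cdot,j}}_{4,A}+\RBR{X_{\cdot,k}}_{4,A}\bigr) \lesssim \nu_4^{1/4}N_X$. The integrability exponent of the product thus drops to $q$, half of what is required of $X$, which is exactly why Assumption~\ref{assumption:dep} imposes its moment and dependence conditions at order $2q$; this is the step that demands the most care.

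It remains to aggregate the $p^2$ entrywise bounds. For the polynomial terms the elementary inequality $(a+b)^q \le 2^{q-1}(a^q+b^q)$ gives $\sum_{j,k=1}^p\RBR{Y^{(jk)}_\cdot}_{q,A}^q \lesssim \nu_{2q}^{1/2}\, p \sum_{j=1}^p\RBR{X_{\cdot,j}}_{2q,A}^q = \nu_{2q}^{1/2}\, p\, M_{X,q}^q$, so the double sum collapses to a single factor of $p$ rather than $p^2$, reproducing the leading term $p\,\varpi_{q,A}(\cdot)\,M_{X,q}^q/(B_n^q x^q)$ up to the exact power of the fixed moment constant. For the exponential terms I would bound every denominator uniformly by $\max_{j,k}\RBR{Y^{(jk)}_\cdot}_{2,A}^2 \lesssim \nu_4^{1/2}N_X^2$ and pull the crude count $p^2$ outside the sum, obtaining $p^2\exp\bigl(-C_3 B_n x^2/(\nu_4^{1/2}N_X^2)\bigr)$ in the pointwise case and $p^2\exp\bigl(-C_3 B_n^2 x^2/(n\,\nu_4^{1/2}N_X^2)\bigr)$ in the uniform case. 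Relabeling $C_1,C_2,C_3$ and absorbing the fixed moment powers then yields the stated inequalities (\ref{eqn:Y_i,jk_large-dev}) and (\ref{eqn:Y_i,jk_large-dev_unif}).
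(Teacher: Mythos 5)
Your proposal is correct and follows essentially the same route as the paper's own proof: a Bonferroni union bound over the $p^2$ entries, a dependence-adjusted-norm bound for the product sequences $Y_{i,jk}=X_{ij}X_{ik}$ obtained from the coupling decomposition plus Cauchy--Schwarz (so the moment order drops from $2q$ to $q$), and then the pointwise and uniform halves of Lemma~\ref{lem:nagaev}. The only difference is bookkeeping: you track the exact powers of the fixed moment constants (getting $\nu_{2q}^{1/2}$ and $\nu_4^{1/2}$ rather than the stated $\nu_{2q}^{q}$ and $\nu_4^{2}$), a discrepancy the paper's own terse proof also glosses over by absorbing such factors into the constants.
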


\begin{proof}
For $1\le j,k\le p$, let $Y_{i,jk} = X_{ij} X_{ik}$. We now check the conditions in Lemma \ref{lem:nagaev} for $(Y_{i,jk})_{1\le i\le n}$. Denote $Y_{i,jk,\{m\}} = X_{ij,\{m\}} X_{ik,\{m\}}$. Then the uniform functional dependence measure of $(Y_{i,jk})_i$ is
\begin{eqnarray*}
\theta_{m,q,jk}^Y &=&\sup_i \|Y_{i,jk} - Y_{i,jk,\{m\}}\|_q\\
&=& \sup_i \| X_{ij} X_{ik} - X_{ij,\{m\}} X_{ik,\{m\}} \|_q \\
&\le& \sup_i \|X_{ij} (X_{ik} - X_{ik,\{m\}} ) \|_q + \sup_i \|X_{ik,\{m\}} (X_{ij} - X_{ij,\{m\}} ) \|_q.
%&\le& \sup_i \| X_{ij}\|_{2q} \| X_{ik} - X_{ik,\{m\}} \|_{2q} + \sup_i \|X_{ik,\{m\}}\|_{2q} \| X_{ij} - X_{ij,\{m\}} \|_{2q} \\
%&\le& \sup_i \| X_{ij}\|_{2q} \theta_{m,2q,k} + \sup_i \| X_{ik}\|_{2q} \theta_{m,2q,j}.
\end{eqnarray*}
Thus the DAN of the process $Y_{\cdot, jk}$ satisfies that
\begin{align*}
\|Y_{\cdot, jk}\|_{q,A} %&=& \sup_{m \ge 0} (m+1)^A \theta_{m,q,jk}^Y \\
%&\le& \sup_i \| X_{ij}\|_{2q} \;  \sup_{m \ge 0} (m+1)^A \theta_{m,2q,k} + \sup_i \| X_{ik}\|_{2q} \;  \sup_{m \ge 0} (m+1)^A \theta_{m,2q,j} \\
\le  \sup_i \| X_{ij}\|_{2q} \; \|X_{\cdot,k}\|_{2q,A} + \sup_i \| X_{ik}\|_{2q} \; \|X_{\cdot,j}\|_{2q,A}\le \nu_q(\|X_{\cdot,k}\|_{2q,A}+\|X_{\cdot,j}\|_{2q,A}).
\end{align*}
The result follows immediately from Lemma \ref{lem:nagaev} and the Bonferroni inequality. 
\end{proof}

\begin{lem}\label{lemma:covpredev}
We adopt the notation in Lemma \ref{lemma:Y_i,jk_large-dev}. Suppose Assumptions \ref{assumption:dep}, \ref{assumption:smoothness} and \ref{assumption:kernel}  hold with $\iota=0$. Recall $B_n = n b$, where $b\to 0$ and $B_n/\sqrt n\to \infty$ as $n\to\infty$. Then there exists a constant $C$ independent of $n$ and $p$ such that $\hat\Sigma(t)$ in (\ref{eqn:samplecov}) satisfies that for any $t\in [c,1-c]$,
\begin{align}\label{eqn:cov_dev}
|\hat\Sigma(t)-\Sigma(t)|_\infty &= O_\P \left( b ^2+M_{X,q}\nu_{2q}B_n^{-1}(p\varpi_{q,A}(B_n))^{1/q}+ \nu_4N_X (\log{p}/B_n)^{1/2} \right).
\end{align}
Furthermore, 
\begin{align}
\label{eqn:cov_dev_unif}
{\sup_{t \in [c,1-c]} |\hat\Sigma(t)-\Sigma(t)|_\infty}= O_\P\left( b ^2+M_{X,q}\nu_{2q}B_n^{-1}(p\varpi_{q,A}(n))^{1/q}+\nu_4N_XB_n^{-1} [n\log p]^{1/2} \right).
\end{align}
\end{lem}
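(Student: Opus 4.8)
The plan is a bias--variance decomposition. I would write
$$
\hat\Sigma(t)-\Sigma(t)=\big[\hat\Sigma(t)-\E\hat\Sigma(t)\big]+\big[\E\hat\Sigma(t)-\Sigma(t)\big],
$$
bound the two brackets separately in $|\cdot|_\infty$, and recombine by the triangle inequality. The stochastic bracket is precisely the object controlled by Lemma \ref{lemma:Y_i,jk_large-dev}, once one notes that $\E(\vX_i\vX_i^\top)=\Sigma(t_i)$: since $\vX_i=\vX_i^\circ(t_i)$ and $(\vX_j^\circ(t))_j$ is stationary in $j$ for fixed $t$, we have $\E(X_{ij}X_{ik})=\E(X^\circ_{0j}(t_i)X^\circ_{0k}(t_i))=\sigma_{jk}(t_i)$, so $\hat\Sigma(t)-\E\hat\Sigma(t)=\sum_{i=1}^n w(t,t_i)\big(\vX_i\vX_i^\top-\E(\vX_i\vX_i^\top)\big)$.

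For the bias bracket, $\E\hat\Sigma(t)-\Sigma(t)=\sum_{i=1}^n w(t,t_i)\big(\Sigma(t_i)-\Sigma(t)\big)$. Fix $t\in[c,1-c]$; for $b$ small the window $\{i:|t_i-t|\le b\}$ sits strictly inside $[0,1]$, so all weights are interior. Using $\iota=0$ together with Assumption \ref{assumption:smoothness}(ii), I would Taylor-expand each entry as $\sigma_{jk}(t_i)-\sigma_{jk}(t)=\sigma'_{jk}(t)(t_i-t)+\tfrac12\sigma''_{jk}(\xi_i)(t_i-t)^2$. The symmetry of $K$ (Assumption \ref{assumption:kernel}) makes the continuous first moment $\int uK(u)\,du$ vanish, leaving $\sum_i w(t,t_i)(t_i-t)=O(1/n)$ (a Riemann-sum remainder at interior $t$), while the compact support $|t_i-t|\le b$ bounds the quadratic term by $\tfrac12\big(\max_{j,k}\sup_t|\sigma''_{jk}(t)|\big)b^2\int u^2K(u)\,du$. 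This gives $\sup_{t\in[c,1-c]}|\E\hat\Sigma(t)-\Sigma(t)|_\infty=O(b^2)+O(1/n)$, uniformly in $(j,k)$ once the second derivatives are bounded uniformly in $j,k$; the $O(1/n)$ piece is dominated by the stochastic rate below, since $1/n=o\big(\nu_4N_X(\log p/B_n)^{1/2}\big)$, and may be dropped.

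To turn Lemma \ref{lemma:Y_i,jk_large-dev} into $O_\P$ statements I would choose the deviation threshold so that the polynomial and sub-Gaussian tails are simultaneously negligible. Pointwise, apply \eqref{eqn:Y_i,jk_large-dev} with $x=C\big(M_{X,q}\nu_{2q}B_n^{-1}(p\varpi_{q,A}(B_n))^{1/q}+\nu_4 N_X(\log p/B_n)^{1/2}\big)$: the first summand forces the polynomial term $\lesssim C^{-q}$, while the second gives $B_nx^2/(\nu_4^2N_X^2)\gtrsim C^2\log p$, so the exponential term is $\lesssim p^{2-C_3C^2}$, both arbitrarily small for large $C$; adding the $O(b^2)$ bias yields \eqref{eqn:cov_dev}. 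For the uniform statement, apply \eqref{eqn:Y_i,jk_large-dev_unif} with $x=C\big(M_{X,q}\nu_{2q}B_n^{-1}(p\varpi_{q,A}(n))^{1/q}+\nu_4N_XB_n^{-1}(n\log p)^{1/2}\big)$, where now the second summand makes the uniform exponent $B_n^2x^2/(n\nu_4^2N_X^2)\gtrsim C^2\log p$, again killing the $p^2$ prefactor; adding the $O(b^2)$ bias yields \eqref{eqn:cov_dev_unif}.

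The main obstacle is the bias analysis rather than the concentration, which is already packaged in Lemma \ref{lemma:Y_i,jk_large-dev}. One must verify that the first-order Taylor contribution genuinely vanishes up to negligible order (cancellation by kernel symmetry plus control of the discretization/Riemann-sum error at interior $t$), and that the quadratic bound is uniform over the growing index set $1\le j,k\le p$ as well as over $t\in[c,1-c]$. The remaining care is purely bookkeeping: matching the two thresholds so that the polynomial and exponential tails are simultaneously negligible, and confirming that the dropped $O(1/n)$ bias is dominated by the stochastic rate $\nu_4N_X(\log p/B_n)^{1/2}$.
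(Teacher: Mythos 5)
Your proposal is correct and follows essentially the same route as the paper's proof: a bias--variance split, with the stochastic term handled by Lemma \ref{lemma:Y_i,jk_large-dev} at exactly the thresholds you chose, and the bias handled by a second-order Taylor expansion in which kernel symmetry kills the first-order term. The only cosmetic difference is that the paper first replaces the discrete weighted sum by the integral $\int_{-1}^{1}K(u)[\sigma_{jk}(ub+t)-\sigma_{jk}(t)]\,du$ (incurring an $O(B_n^{-1})$ discretization error) and then expands, whereas you expand first and bound the Riemann-sum remainder directly; both remainders are dominated by the stochastic rate, as you note.
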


\begin{proof}
First we have
\begin{align*}
\E\hat\sigma_{jk}(t)-\sigma_{jk}(t) = \sum_{i=1}^n w(t,t_i) [\sigma_{jk}(t_i)-\sigma_{jk}(t)].
\end{align*}
Approximating the discrete summation with integral, we obtain for all $1\le j,k\le p$,
\begin{align*}
\sup_{t\in [b,1-b]}\rBR{\E\hat\sigma_{jk}(t)-\sigma_{jk}(t)-\int_{-1}^{1} K(u)[\sigma_{jk}(u b +t)-\sigma_{jk}(t)]du}=O\left({ B_n^{-1}}\right).
\end{align*}
By Assumption \ref{assumption:smoothness}, we have 
\begin{align*}
\sigma_{jk}(ub+t)-\sigma_{jk}(t)&=u b \sigma'_{jk}(t)+\frac{1}{2}u^2 b ^2\sigma''_{jk}(t)+o( b ^2u^2).
\end{align*}
Thus we have $\sup_{t\in [c,1-c]}|\E\hat\sigma(t)-\sigma(t)|_\infty=O\left({B_n^{-1}}+ b ^2\right)$,  in view of Assumption \ref{assumption:kernel}.  
By Lemma \ref{lemma:Y_i,jk_large-dev}, we have
 \begin{align*}
\sup_{t\in (0,1)} \P\left(\left|\hat\Sigma(t)-\E\hat\Sigma(t)\right|_\infty \ge x\right) %&\le \Prob \left(\left| \sum_{i \in I_t} a_i  Y_i \right|_\infty \ge c_t^{-1/2} x \right) \\\no number
&\le C_1 p \nu_q^q {M_{X,q}^q \varpi_{q,A}(B_n) \over B_n^{q} x^q} + C_2p^2\exp\rbr{-C_3{B_nx^2 \over N_X^2}}.
\end{align*}
Denote $u= C_4\big(M_{X,q}\nu_{2q}B_n^{-1}(p\varpi_{q,A}(B_n))^{1/q}+ \nu_4N_X (\log{p}/B_n)^{1/2}\big)$ for a large enough constant $C_4$, then for any $t\in (0,1)$,
\begin{align}\nonumber
\left|\hat\Sigma(t)-\E\hat\Sigma(t)\right|_\infty=O_\P(u).
\end{align}
Thus (\ref{eqn:cov_dev}) is proved. The result (\ref{eqn:cov_dev_unif}) can be obtained similarly.
\end{proof}

\subsection{Proof of main results}
\begin{proof}[Proof of Proposition \ref{prop:precmx_smooth}]
Given (\ref{eqn:cov_dev}) and (\ref{eqn:cov_dev_unif}), the proof of (\ref{eqn:pre_dev}) is standard. (See, e.g. \cite[Theorem 6]{MR2847973}). For $\lambda^\circ$ and $\lambda^*$ given in Proposition \ref{prop:precmx_smooth}, by Lemma \ref{lemma:covpredev}, we have that respectively, 
\begin{align}\label{eqn:lambdao}
\lambda^\circ&\ge \sup_{t}\E\big(\kappa_p|\hat \Sigma(t)-\Sigma(t)|_\infty\big),\\
\label{eqn:lambda*}
\lambda^\diamond&\ge \E\big(\kappa_p\sup_{t}|\hat \Sigma(t)-\Sigma(t)|_\infty\big).
\end{align}
Then note that for any $t\in[0,1]$, for any $\lambda>0$,
\begin{align*}
&|\hat\Omega_\lambda(t)-\Omega(t)|_\infty \le |\Omega(t)|_{L_1}|\Sigma(t)\hat\Omega_\lambda(t)-\Id_p|_\infty\\
&\qquad\le |\Omega(t)|_{L_1}  \big[|\hat\Sigma(t)\hat\Omega_\lambda(t)-\Id_p|_\infty+|(\Sigma(t)-\hat\Sigma(t))\Omega(t)|_\infty+|\hat\Omega_\lambda(t)-\Omega(t)|_{L^1}|\hat\Sigma(t)-\Sigma(t)|_\infty\big]
\end{align*}
where by construction, we have $|\hat\Sigma(t)\hat\Omega_\lambda(t)-\Id_p|_\infty\le \lambda$ and $|\hat\Omega_\lambda(t)-\Omega(t)|_{L^1}\le 2\kappa_p$. Consequently,
\begin{align}\label{eqn:MT1236p}
|\hat\Omega_\lambda(t)-\Omega(t)|_\infty\le \kappa_p\big(\lambda+3\kappa_p|\hat\Sigma(t)-\Sigma(t)|_\infty\big).
\end{align}
Then (\ref{eqn:pre_dev}) and (\ref{eqn:pre_dev_unif}) follow from (\ref{eqn:lambdao}) to (\ref{eqn:MT1236p}).
\end{proof}

\begin{proof}[Proof of Proposition \ref{prop:partial_support_recovery}]
Theorem \ref{prop:partial_support_recovery} is an immediate result of (\ref{eqn:pre_dev_unif}).
\end{proof}

\begin{proof}[Proof of Theorem \ref{thm:jumppointsestimation}]
Denote $r_j, 1\le j\le \iota$ as the time point(s) of the time of jump ordered decreasingly in the sense of the infinite norm of covariance matrices, i.e., $|\Delta(r_1)|_\infty\ge |\Delta(r_2)|_\infty\ge \ldots\ge |\Delta(r_\iota)|_\infty\ge |\Delta(s)|_\infty$ for $s\in (0,1)\cap \{r_1,\ldots, r_\iota\}^c$. (Temporal order is applied if there is a tie.)  Let ${\calT}_{h}(j)=[r_j-h,r_j+h)$. For $h=o(1)$, as a result of Assumption \ref{assumption:jump}, ${\calT}_{h}(j)\cap {\calT}_{h}(i)=\emptyset$ if $i\neq j$ for $n$ sufficiently large. That is to say, each time point $s\in(0,1)$ is in the neighborhood of at most one change point. 

For any $s\in [t^{(j)},t^{(j+1)})$, $j=0,1,\ldots, \iota$, denote $\mathbb{D}(s)=\E[D(s)]$ and 
\begin{equation}
\label{eqn:D(s)_expectation-nonstationary}
\D^\diamond(s) = \left\{
\begin{array}{cc}
(h-s+t^{(j)}) \Delta(t^{(j)}), & t^{(j)} \le s <t^{(j)} +h \\
0, & t^{(j)} +h \le s < t^{(j+1)}-h \\
(h+s-r) \Delta(t^{(j+1)}), & t^{(j+1)}-h \le s \le t^{(j+1)}.
\end{array}
\right. 
\end{equation}
Then, for $s \in \cup_{1\le j\le \iota}[t^{(j)} +h, < t^{(j+1)}-h)$, by (\ref{eqn:L-cond-1-ture-covariance-matrix}), we have
\begin{equation*}
|\Sigma(s +t) - \Sigma(s) |_\infty \le {L t }, \qquad \forall |t|\le h,
\end{equation*}
we can easily verify that
\begin{equation}\label{eqn:diff_stat_nonstat}
\sup_{s \in [0,1]} |\D(s) - \D^\diamond(s)|_\infty \le Lh^2.
\end{equation}
Note that $|\D^\diamond(s)|_\infty$ is maximized at $s = r_1$ and $|\D^\diamond(r_1)|_\infty = h |\Delta(r_1)|_\infty$.  By the triangle inequalities, we have that for some positive constant $C$, for any $s\in [0,1]$,
\begin{eqnarray}\nonumber
|\D(r_1)|_\infty - |\D(s)|_\infty &\ge& h c_2 - |\D(r_1) - \D^\diamond(r_1)|_\infty - |\D^\diamond(s)|_\infty - |\D(s) - \D^\diamond(s)|_\infty \\\nonumber
&\ge& h c_2 - |\D^\diamond(s)|_\infty -2Lh^2 \\\label{eqn:D_lower}
&\ge&  c_2 (|s-r_1|\wedge h) - 2 L h^2.
\end{eqnarray}
On the other hand, since $|D(r_1)|_\infty\le|D(\hat s_1)|_\infty$, we have
\begin{eqnarray}\nonumber
|\D(r_1)|_\infty-|\D(\hat s_1)|_\infty&\le |D(r_1)|_\infty-|D(\hat s_1)|_\infty+ |\D(r_1)-D(r_1)|_\infty+|\D(\hat s_1)-D(\hat s_1)|_\infty\\\label{eqn:D_upper}
&\le |\D(r_1)-D(r_1)|_\infty+|\D(\hat s_1)-D(\hat s_1)|_\infty.
\end{eqnarray}
Denote the event ${\calA}:= \{\sup_{s\in [h,1-h]} |D(s) - \D(s)|_\infty \le  h_\diamond^2\}$ and let ${\bf Y_i}=(Y_{i,jk})_{1\le j,k\le p}$, $Y_{i,jk} = X_{ij} X_{ik} -\sigma_{i,jk}$. Note that
\begin{equation}
\label{eqn:D<=Y}
|D_{jk}(s) - \D_{jk}(s)| = {1\over n} \left| \sum_{i=1}^{hn} Y_{n_s+1-i,jk} -  \sum_{i=1}^{hn} Y_{n_s+i,jk} \right|.
\end{equation}
By Lemma \ref{lemma:Y_i,jk_large-dev}, we have for any $x > 0$,
\begin{align}
\label{eqn:large-deviation-infty}
\Prob\left(\sup_{s\in [h,1-h]} |D(s) - \D(s)|_\infty \ge  x\right)
%\nonumber
%\le& 2\left[\P\left(\max_{1\le l\le n-kn}|\sum_{i=1}^{l+kn} Y_i|_\infty \ge nx/2\right)+\P\left(\max_{1\le l\le n}|\sum_{i=1}^{l}  Y_i|_\infty \ge nx/2\right)\right]\\
\le  C_1 {p \varpi_{q,A}(n) M_{X,q}^q\nu_{2q}^q \over n^{q} x^q} + C_2 p^2 \exp\left(-C_3 {nx^2\over N_X^{2}}\right).
\end{align}
It follows that 
\[
|\D(r_1)|_\infty-|\D(\hat s_1)|_\infty=O_\P\big(h^{-1}J_{q,A}(n,p)+N_Xh^{-1}(n^{-1}\log (p))^{1/2}\big).
\]
Taking $h=h_\diamond$, we have
\[
|\hat s_1-r_1|=O_\P(h_\diamond^2).
\]
Furthermore we have
\[
\P({\calA})\ge 1-C_1\big({p \varpi_{q,A}(n) M_{X,q}^q \nu_{2q}^q \over n^{q} c_2^q}\big)^{1/3}-C_2p^2\exp\big(-C_3({n\log^2(p)\over N_X^2})^{1/3}\big).
\]

Let ${\calA}_k:=\{\max_{1\le j\le k}|\hat s_j-r_j|\le c_2^{-1}2(L+1)h_\diamond^2\}$ for some $1\le k\le \iota$. Assume ${\calA}_k\subset {\calA}$. Under ${\calA}_k$ we have that $[r_j-h_\diamond,r_j+h_\diamond)\subset \hat{{\calT}}_{2h_\diamond}(j)=:[\hat s_j-2h_\diamond,\hat s_j+2h_\diamond)$ for $1\le j\le k$ and $r_{k+1}\notin\cup_{1\le j\le k}\hat{{\calT}}_{2h_\diamond}(j)$ as a consequence of Assumption \ref{assumption:jump}. According to (\ref{eqn:D_lower}) and (\ref{eqn:D_upper}), we have if ${\calA}$ is true, $|\hat s_{k+1}-r_{k+1}|\le c_2^{-1}2(L+1) h_\diamond^2$, which implies ${\calA}_{k+1}\subset {\calA}$. 
The result (\ref{eqn:jumppoint}) follows from deduction. 

Suppose ${\calA}$ holds. By the choice of $\nu$, as a consequence of (\ref{eqn:diff_stat_nonstat}) and (\ref{eqn:large-deviation-infty}), and that $\nu\ll h_\diamond$, we have that 
\[
\sup_{s\in [0,1]}|D(s)-\D^\diamond(s)|_\infty\le \nu.
\]
 As a result, 
 \[
 \min_{1\le j\le \iota} |D(r_j)|_\infty\ge c_2 h_\diamond-\nu\ge \nu,
 \]
 i.e., $\hat \iota\ge \iota$. On the other hand, since $\cup_{1\le j\le \iota}\hat{\calT}_{2h_\diamond}(j)$ is excluded from the searching region for $s_{\iota+1}$, we have
 \[
 \sup_{s\in \big(\cup_{1\le j\le \iota}\hat{\calT}_{2h_\diamond}(j)\big)^c} |D(s)|_\infty\le \nu.
 \]
  In other words, $\{\hat \iota=\iota\}\subset {\calA}$. Thus (\ref{eqn:jumpnumber}) is proved. 

\end{proof}

\begin{proof}[Proof of Theorem \ref{thm:support_recovery}]
We adopt the notations in the proof of Theorem \ref{thm:jumppointsestimation} and assume that $\calE$ holds. Similar as in Lemma \ref{lemma:covpredev}, we have that by Lemma \ref{lemma:Y_i,jk_large-dev}, for any $t\in (0,1)$,
\begin{align}\nonumber
\left|\hat\Sigma(t)-\E\hat\Sigma(t)\right|_\infty=O_\P(u),
\end{align}
where $u= C_4\big(M_{X,q}\nu_{2q}B_n^{-1}(p\varpi_{q,A}(B_n))^{1/q}+ \nu_4N_X (\log{p}/B_n)^{1/2}\big)$ for a large enough constant $C_4$. 

Since under $\calE$, ${\calT}_{b}(j)\subset \hat{\calT}_{b+h_\diamond^2}(j)$. For $t\in \big(\cup_{1\le j\le \iota}\hat{\calT}_{b+h_\diamond^2}(j)\big)^c\cap [b,1-b]$, we have that for all $1\le j,k\le p$, 
\begin{align*}
\rBR{\E\hat\sigma_{jk}(t)-\sigma_{jk}(t)}
&=\int_{-1}^{1} K(u)[\sigma_{jk}(u b +t)-\sigma_{jk}(t)]du+O\left({ B_n^{-1}}\right)\\
&= b\sigma'_{jk}(t)\int_{-1}^{1} uK(u) du+ \big(\frac{1}{2} b ^2\sigma''_{jk}(t)+o(b^2)\big)\int_{-1}^{1} u^2K(u) du+O\left({ B_n^{-1}}\right)\\
&=O(b^2+B_n^{-1}).
\end{align*}
On the other hand, for $t\in \cup_{1\le j\le \iota}\big(\hat{\calT}_{b+h_\diamond^2}(j)\cap {\calT}_{h_\diamond^2}^c(j)\big)\cup [0,b]\cup[1-b,1]$, due to reflection, we no longer have that differentiability. As a result of the Lipschitz continuity, we get
\begin{align*}
\rBR{\E\hat\sigma_{jk}(t)-\sigma_{jk}(t)}
=\int_{-1}^{1} K(u)[\sigma_{jk}(u b +t)-\sigma_{jk}(t)]du+O\left({ B_n^{-1}}\right)=O(b+B_n^{-1}).
\end{align*}
The result (\ref{eqn:cov_boundary_unif}) follows by the choices of $b$. The rest of the proof are similar as in that of Proposition \ref{prop:precmx_smooth} and Theorem \ref{prop:partial_support_recovery}.
\end{proof}

\bibliographystyle{plain}
\bibliography{tvnetwork}
\end{document}